\documentclass[11pt,reqno]{amsart}
\usepackage{amssymb,xfrac}
\usepackage{amsmath}
\usepackage{color}
\usepackage{graphics, cite, bookmark}
\usepackage{epsfig}
\usepackage{hyperref}
\usepackage{epstopdf}
\usepackage{ esint }
\textwidth=15.0cm \textheight=21cm \hoffset=-1.1cm \voffset=-0.5cm
\usepackage{amsthm}
\usepackage{lmodern}

\newtheorem{theorem}{Theorem}[section]
\newtheorem*{theorem*}{Theorem}
\newtheorem{lemma}[theorem]{Lemma}

\newtheorem{proposition}[theorem]{Proposition}

\theoremstyle{definition}

\theoremstyle{remark}
\newtheorem{remark}{Remark}[section]

\numberwithin{equation}{section}
\allowdisplaybreaks

\newcommand{\R}{\ensuremath{\mathbb{R}}}

\newcommand{\N}{\ensuremath{\mathbb{N}}}

\renewcommand{\S}{\ensuremath{\mathbb{S}}}

\newcommand{\sym}{\ensuremath{\mathrm{sym}}}
\newcommand{\Sym}{\ensuremath{\mathrm{Sym}_n}}

\usepackage{bbm}

\begin{document}

\title
[Nash-Kuiper theorem beyond Borisov]{A Nash-Kuiper theorem for isometric immersions beyond Borisov's exponent}

\author{Wentao Cao, Jonas Hirsch, and Dominik Inauen}

\address{Wentao Cao, Academy for Multidisciplinary Studies, Capital Normal University, West 3rd Ring North Road 105, Beijing, 100048 P.R. China. E-mail:{\tt cwtmath@cnu.edu.cn}}
\address{Jonas Hirsch, Institut f\"{u}r Mathematik, Universit\"{a}t Leipzig, D-04109, Leipzig, Germany.  E-mail:{\tt jonas.hirsch@math.uni-leipzig.de}}
\address{Dominik Inauen, Institut f\"{u}r Mathematik, Universit\"{a}t Leipzig, D-04109, Leipzig, Germany.  E-mail:{\tt dominik.inauen@math.uni-leipzig.de}}

\begin{abstract}

Given any short immersion from an $n$-dimensional bounded and simply connected domain into $\R^{n+1}$ and any H\"older exponent $\alpha<(1+n^2-n)^{-1}$, we construct a $C^{1, \alpha}$ isometric immersion arbitrarily close in the $C^0$ topology. This extends the classical Nash--Kuiper theorem and shows the flexibility of  $C^{1, \alpha}$ isometric immersions beyond Borisov's exponent. In particular, for $n=2$, the regularity threshold aligns with the Onsager exponent $1/3$ for the incompressible Euler equations. Our proof relies on three novelties that allow for the cancellation of leading-order error terms in the convex integration scheme: a new corrugation ansatz, an integration by parts procedure, and an adapted algebraic decomposition of these errors. 
\end{abstract}

\subjclass{ 58D10, 53C21, 57N35}

\date{\today}



\maketitle 
\section{Introduction}

The isometric immersion problem is a fundamental problem in differential geometry. It seeks an immersion $ u: (\mathcal{M}, g) \to \R^m $ from an $ n $-dimensional Riemannian manifold $ (\mathcal{M}, g) $ into $ m $-dimensional Euclidean space that preserves the length of any $ C^1 $ curve. This condition is equivalent to the equality of the induced metric $ u^\sharp e$ and the intrinsic metric $ g $. In local coordinates, this translates to the system of $ n_* = n(n+1)/2 $ nonlinear partial differential equations:
\begin{equation}\label{e:iso}
g_{ij} = \partial_i u \cdot \partial_j u, \quad 1 \leq i, j \leq n,
\end{equation}
in $ m $ unknowns.

The classical Nash--Kuiper theorem \cite{Nash54,Kuiper} establishes that for $ m \geq n+1 $, any short immersion (or embedding) $ u: \mathcal{M} \to \R^m $ can be uniformly approximated by $ C^1 $ isometric immersions (resp. embeddings). Here,  an immersion $u$ is called \emph{short} if $\partial_i u \cdot \partial_j u \leq g_{ij}$ as quadratic forms. In particular, if the manifold is compact, any immersion can be made short by a homothety. The Nash--Kuiper theorem therefore demonstrates that when there are no topological obstructions to immersing the manifold into $ \R^m $ (a condition satisfied for $ m \geq 2n-1 $), there exists an abundance of $ C^1 $ solutions to \eqref{e:iso}. This abundance, often termed the \emph{flexibility} of isometric immersions,  is especially striking given the overdetermined nature of \eqref{e:iso} for $ m \geq n+1 $ and large $ n $. 

In contrast, classical \emph{rigidity} results show that \emph{smooth} isometric embedding into Euclidean spaces with such low codimension are typically unique. A prominent example is the rigidity theorem for the Weyl problem due to Cohn-Vossen \cite{CohnVossen1927} and Herglotz \cite{Herg1943}, which states that a $C^2$ isometric embedding $u:(\S^2, g) \to \R^3$, where $ g $ has  positive Gaussian curvature, is unique up to rigid motions.

 This strong contrast between flexibility in $C^1$ and rigidity in $C^2$ raises the natural question: is there a critical H\"older regularity threshold $ \alpha_0 $ that separates flexibility from rigidity? That is, does there exist a threshold $\alpha_0$ such that 
 \begin{itemize}
 \item if $\alpha>\alpha_0$, isometric immersions $u\in C^{1,\alpha}$ exhibit (some form of) rigidity;
 \item if $ \alpha < \alpha_0$, the Nash--Kuiper theorem extends to $C^{1,\alpha}$?
 \end{itemize}
 The precise value of $\alpha_0$ remains unresolved. Similarities between the iteration processes used to construct both isometric immersions or embeddings of regularity $C^{1,\alpha}$ and H\"older continuous weak solutions to the incompressible Euler equations (see e.g. \cite{CShighdim}) suggest the Onsager exponent $\alpha_0 = 1/3$ as a potential threshold. Indeed, the Onsager conjecture, which describes a similar phenomenon, states that for $C^{\alpha}$ weak solutions to the incompressible Euler equations
 \begin{itemize}
 \item[(1)]if $\alpha> 1/3$, it preserves the energy;
 \item[(2)]if $\alpha < 1/3$, the energy identity might be violated.
 \end{itemize}
 The rigidity result (1) was established in \cite{CET1994}, while the flexibility result (2) was ultimately resolved in \cite{Isett2018} following a sequence of works \cite{Desz2012, DeSz2014, BDIS2015, BDSV2019} that built on the groundbreaking approach of \cite{DeSz2009}, where the authors introduced a Nash-type iteration scheme to construct continuous weak solutions violating the energy identity.
 
 On the other hand, \cite{DI2020,CaoIn2024} demonstrate that for isometric embeddings, $C^{1,\sfrac{1}{2}}$ is a critical space \emph{in a suitable sense}, suggesting $\alpha_0 = 1/2$ (see also \cite[Question 36]{Gromov2017}, \cite[Section 10]{CShighdim}). 

This paper focuses on the flexibility side of this dichotomy for general dimension $n$ and codimension one. The case of general codimension $m\geq n+1$ is investigated in forthcoming work.

\subsection{Flexibility of $C^{1,\alpha}$ isometries: known results}

The study of $ C^{1,\alpha} $ isometric immersions dates back to the pioneering work of Yu. F. Borisov in the 1950s. Building on results of Pogorelov, Borisov proved in \cite{Borisov1958} that the Cohn--Vossen--Herglotz rigidity theorem extends to immersions of class $ C^{1,\alpha} $ when $ \alpha > 2/3 $ (see \cite{CDS} for an alternative proof).  On flexibility, Borisov announced in \cite{Borisov1965} that the Nash--Kuiper theorem extends to $ C^{1,\alpha} $ for
\begin{equation}\label{e:Borisovsexponent} \alpha < \frac{1}{1 + n^2 + n}\,, \end{equation} 
the \emph{Borisov exponent}, when $ \mathcal{M} $ is an $ n $-dimensional ball, with a potential improvement to $ \alpha < 1/5 $ for $ n = 2 $. He provided a proof for $ n = 2 $ and $ \alpha < 1/7 $ with an analytic metric in \cite{Borisov2004}. More recently, \cite{CDS} confirmed Borisov's claims, and in \cite{DIS} it was shown that for $ n = 2 $ one can indeed achieve the exponent  $ \alpha < 1/5 $.  These results were extended to general compact manifolds in \cite{CaoSze2022}.

\begin{remark}[High codimension]
If the codimension is large, more regular isometric immersions can be constructed. The breakthrough result in this direction is also due to Nash \cite{Nash56}, who proved that any $ (M, g) $ with $ g \in C^k $, $ k \geq 3 $, admits a $ C^k $-regular isometric immersion into $ \mathbb{R}^m $ for sufficiently large $ m $. Gromov \cite{GromovPdr} and G\"unther \cite{Guenther} improved the codimension bounds, the latter simplifying Nash's intricate iteration (now known as the \emph{hard implicit function theorem}). For less regular metrics $ g \in C^{l,\beta} $ with $ 0 < l + \beta < 2 $, K\"all\'en \cite{Kaellen} demonstrated that if $ m $ is large enough (one can show that $m\geq 2n_* + n $ suffices), there exists a $ C^{1,\alpha} $ immersion for any $ \alpha < (l + \beta)/2 $. See also \cite{DI2020,CaoIn2024,CaoSz2023} for further results in high codimension.
\end{remark}

\subsection{Statement of the result}

This paper improves the achievable H\"older exponent in the codimension one setting. Our main result is:

\begin{theorem}\label{t:main}
Let $ \Omega \subset \R^n $ be  any smooth bounded and simply connected domain  and $ g \in C^2(\overline{\Omega}, \Sym^+) $ a Riemannian metric. For any short immersion $ \underline{u} \in C^1(\overline{\Omega}, \R^{n+1}) $, any $ \epsilon > 0 $, and any
\begin{equation}\label{e:exponent}
\alpha < \frac{1}{1+n^2 - n},
\end{equation}
there exists an immersion $ u \in C^{1,\alpha}(\overline{\Omega}, \R^{n+1}) $ such that
\[ Du^tDu = g \quad \text{and} \quad \|u - \underline{u}\|_0 < \epsilon. \]
\end{theorem}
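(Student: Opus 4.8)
The plan is to follow the standard Nash–Kuiper convex integration scheme, building $u$ as a limit $u = \lim_{q\to\infty} u_q$ of a sequence of immersions $u_q \in C^2(\overline{\Omega},\R^{n+1})$ which are approximate solutions, in the sense that the \emph{metric error} $h_q := g - Du_q^t Du_q$ is positive definite and decays to zero at a controlled exponential rate $\|h_q\|_0 \lesssim \delta_{q+1}$, while the derivatives grow like $\|D^2 u_q\|_0 \lesssim \delta_q^{1/2}\lambda_q$ for frequency parameters $\lambda_q = \lambda_0^{b^q}$ and amplitudes $\delta_q = \lambda_q^{-2\beta}$. Convergence in $C^{1,\alpha}$ for the claimed range of $\alpha$ then follows from the usual interpolation argument provided $\beta$ can be taken arbitrarily close to $(1+n^2-n)^{-1}$, i.e.\ provided each stage of the iteration only costs a metric error controlled by $\lambda_{q+1}^{-2\beta}$ with $b$ arbitrarily close to $1$. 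The bulk of the proof is therefore a single \emph{stage} proposition: given $u_q$ with the above estimates, produce $u_{q+1}$ with the estimates at level $q+1$ and $\|u_{q+1}-u_q\|_1 \lesssim \delta_{q+1}^{1/2}$.

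Each stage is decomposed into \emph{steps}, one for each of the $n_* = n(n+1)/2$ entries in a decomposition $h_q = \sum_k a_k^2\, \nu_k\otimes\nu_k$ of the metric error into a sum of primitive (rank-one) metrics with smooth positive coefficients $a_k$ and constant directions $\nu_k$ (Nash's decomposition, via a partition of unity and mollification of $h_q$). In each step one adds a highly oscillatory \emph{corrugation} along the direction $\nu_k$ to cancel the term $a_k^2\,\nu_k\otimes\nu_k$; the classical Nash spiral produces a new error of size $\sim a_k \|D^2(\text{stage})\|_0 / \lambda \sim \delta_{q+1}^{1/2}\delta_q^{1/2}\lambda_q/\lambda_{q+1}$ per step, and iterating $n_*$ times forces $\lambda_{q+1} \gtrsim \lambda_q^{1+n_*\cdot(\text{something})}$, which is exactly what produces Borisov's exponent $(1+n^2+n)^{-1}$. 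The improvement to $(1+n^2-n)^{-1}$ must come from the three novelties advertised in the abstract: a \emph{new corrugation ansatz} replacing the Nash spiral, an \emph{integration-by-parts} step that exploits oscillation to gain an extra factor in the leading error term, and an \emph{adapted algebraic decomposition} of the resulting error into pieces that can themselves be absorbed into subsequent corrugations rather than merely estimated. Concretely, one writes the new error after all $n_*$ steps as a leading term plus lower-order remainders, and the key algebraic identity shows the leading term can be re-decomposed so that its contribution to the recursion scales like $n-1$ rather than $n+1$ additional "expensive" steps.

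I would organize the write-up as: (i) fix the parameters $b, \beta$ and the relations $\delta_q = \lambda_q^{-2\beta}$, $\lambda_q = \lambda_0^{b^q}$, and record the mollification estimates for $g$ and $h_q$; (ii) state and prove the single-step proposition — add one corrugation with the new ansatz, compute the new metric error via Taylor expansion, and estimate each piece — keeping careful track of which error pieces are of the form "new primitive metric" (to be cancelled later) versus genuinely small; (iii) the integration-by-parts lemma, isolating the mechanism by which a term of the naive size $\delta^{1/2}\delta^{1/2}\lambda_q/\lambda_{q+1}$ is improved by the oscillation; (iv) assemble the $n_*$ steps into a stage, using the adapted algebraic decomposition to propagate errors, and verify the stage estimates; (v) iterate and pass to the limit, checking $C^{1,\alpha}$ convergence and that the short immersion $\underline u$ can be used to start the iteration (a standard preliminary mollification and rescaling reduces to the case where $h_0$ is positive definite with the required smallness), as well as that $u$ remains an immersion. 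The main obstacle — and the genuinely new content — is step (iii)–(iv): making the integration by parts compatible with the algebraic decomposition so that the error terms one chooses \emph{not} to estimate but rather to feed back into the scheme are exactly primitive metrics of the right size, with all cross terms and commutators from mollification and from the non-constant coefficients $a_k$ shown to be subleading. Everything else is, modulo bookkeeping, the by-now-standard Nash–Kuiper machinery as in \cite{CDS, DIS, CaoSze2022}.
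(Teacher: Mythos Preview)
Your high-level outline tracks the paper's architecture, but the proposal is missing the concrete mechanism that produces the exponent $(1+n^2-n)^{-1}$ rather than Borisov's $(1+n^2+n)^{-1}$. The point is \emph{not} that all $n_*$ step errors are uniformly improved and fed back; rather, the $n_*$ steps split into two qualitatively different groups. One fixes a basis $\{\nu_i\otimes\nu_i\}$ of $\Sym$ ordered so that $\nu_i\cdot e_1\neq 0$ for $i=1,\ldots,n$. The algebraic lemma is that for such $\nu$, every $M\in\Sym$ decomposes as $M=\alpha(M)\odot\nu + F$ with $F\in\mathrm{span}\{\nu_j\otimes\nu_j:j>n\}$. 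Iterating this $J$ times, each leading error $\gamma(\lambda x\cdot\nu)M$ (with $\gamma$ of zero mean) becomes $2\,\sym(Dw) + (\mu/\lambda)^J E + F$, $F$ still in that subspace. The new corrugation ansatz carries an extra tangential piece $\delta\, Du(Du^tDu)^{-1}w$ precisely so that $2\,\sym(Dw)$ appears in the induced metric and cancels. Hence the first $n$ steps leave an error of size $(\mu/\lambda)^J$ (arbitrarily small) plus a term $\mathcal F$ confined to the $(n_*-n)$-dimensional subspace; $\mathcal F$ is then cancelled exactly by adjusting the amplitudes $b_j=\sqrt{a_j^2-\delta^{-1}L_j(\mathcal F)}$ in the remaining $n_*-n$ \emph{ordinary} steps. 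Only those last $n_*-n$ steps contribute to the $C^2$ growth, whence $\alpha<(1+2(n_*-n))^{-1}=(1+n^2-n)^{-1}$. Your phrase ``scales like $n-1$ rather than $n+1$ additional expensive steps'' does not capture this: $n$ steps become \emph{free}, not cheaper, and the reason is the subspace structure, which your outline never identifies.

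Two further ingredients are also absent from your sketch and without them step (iii)--(iv) would fail. First, one of the leading errors has the form $\gamma_2^2\,M$ with $\gamma_2^2$ of nonzero mean, so integration by parts does not apply directly; the paper writes $\gamma_2^2=(\gamma_2^2-\fint\gamma_2^2)+\fint\gamma_2^2$ and absorbs the slow part $\fint\gamma_2^2\, M=2\pi\lambda^{-2}\nabla a\otimes\nabla a$ into the metric decomposition itself via a K\"all\'en-type Picard iteration (so the decomposition lemma must already read $H=\sum a_i^2\nu_i\otimes\nu_i+\sum 2\pi\lambda_l^{-2}\nabla a_l\otimes\nabla a_l+E$). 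Second, the tangential part of the new ansatz is taken of size $\delta$ rather than the usual $\delta^{1/2}$; this is what forces all remaining cross terms to land at order $\delta^{3/2}$ and hence be harmless. Your proposal treats these as ``bookkeeping'', but they are exactly the content that distinguishes this paper from \cite{CDS,DIS}.
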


\begin{remark}
For $ n = 2 $, Theorem \ref{t:main} yields flexibility of $ C^{1,\sfrac13-} $ isometric immersions from surfaces to $ \R^3 $, aligning with the H\"older exponent of Onsager's conjecture. Given the parallels between convex integration solutions for Onsager's conjecture and isometric immersions (see e.g. \cite{BuckmasterVicol,CShighdim}) and the heuristic discussion about the H\"older exponent $\alpha<(1+2N)^{-1}(N\geq1)$ in Section \ref{s:borisov},  $ 1/3 $ may be optimal for codimension-one flexibility.
\end{remark}

\begin{remark}
    Following classical arguments (see e.g. \cite[Section 8]{DIS}), one can show that if the short map $\underline u$ in Theorem \ref{t:main} is an embedding, then $u$ can be chosen to be an embedding as well. 
\end{remark}

\begin{remark}
With additional effort, the key iteration Proposition \ref{p:stage} can be adapted to the framework of  \cite{CaoSze2022}, leading to a global version of Theorem \ref{t:main}. We prioritize the local version here for clarity. Moreover, our approach is easily adaptable to prove analogous results for very weak solutions to the Monge--Amp\`ere equation and system (see \cite{LewickaPakzad, CaoSz, CHI, MartaSystems,IL25,Martasystems2,Martasystems3}).
\end{remark}

\subsection{Main ideas}\label{s:mainideas}  In this subsection, we briefly recall the classical construction procedure--Nash's iteration for isometric immersions--and provide a heuristic explanation of how Borisov's exponent \eqref{e:Borisovsexponent} is obtained in \cite{CDS}. We then outline the main strategy for proving Theorem \ref{t:main} and highlight how our approach differs from previous methods. To focus on the core ideas, we adopt the local setting of Theorem \ref{t:main}: the manifold $ \mathcal M $ is described by a single coordinate chart $\Omega$, the Riemannian metric $ g $ is a matrix-valued function, and the induced metric $ u^\sharp e $ is given by the matrix field $ D u^t D u $.
 
\subsubsection{Nash's iteration}  

Following Nash \cite{Nash54}, the isometric immersion is obtained as the limit of an iteratively constructed sequence $\{u_q\}$ of strictly short immersions, whose induced metric converges to the intrinsic metric $g$ while $\{u_q\}$ remains Cauchy in some $C^{1,\alpha}$ space.  

The construction of $u_{q+1}$ from  $u_q$, referred to as a \emph{stage} (same terminology as in \cite{Nash54} and subsequent works), consists of a finite number of \emph{steps} designed to correct the metric deficit  $g - Du_q^tDu_q$.
This deficit is first decomposed into a finite sum of \emph{primitive metrics}, i.e., rank-one tensors with positive coefficients:  
\begin{equation}\label{e:decompintro}
g -  Du_q^tDu_q = \sum_{i=1}^N a_i^2 \nu_i \otimes \nu_i,
\end{equation}
where the directions $\nu_i \in \mathbb{S}^{n-1}$ are constant and the coefficients $a_i$ are smooth functions.  
After this decomposition, the short immersion $u_q$ is perturbed by $N$ \emph{steps} as follows: 

Setting $u_{q,0} = u_q$, one defines iteratively
\[
u_{q,i} = u_{q,i-1} + W_{q+1,i}, \text{ for } i = 1, \dots, N\,,\quad  u_{q+1}= u_{q,N}\,.
\]
Each function  $W_{q+1,i}$ is a highly oscillatory perturbation, and is chosen so that the corresponding update increases the induced metric approximately by $a_i^2 \nu_i \otimes \nu_i$, yielding  
\[
Du_{q,i}^tDu_{q, i} = Du_{q,i-1}^tDu_{q, i-1} + a_i^2 \nu_i \otimes \nu_i + E_i,
\]
where the error term $E_i$ can be made arbitrarily small by selecting a sufficiently high oscillation frequency $\lambda_i$ of $W_{q+1,i}$.  
Heuristically, the ansatz for $W_{q+1,i}$ has the form 
\begin{equation}\label{e:ansatzintro} W_{q+1,i} = \frac{a_i}{\lambda_i}\left(\gamma_1(\lambda_i x\cdot \nu_i) t_i +  \gamma_2(\lambda_i x \cdot \nu_i) \zeta_i\right)\end{equation} 
where the frequency $\lambda_i\gg 1 $, $t_i$ is a suitable \emph{tangent vector} (or normal to $ u_{q,i-1}$ in the case $m\geq n+2$ as in \cite{Nash54}), $\zeta_i$ is a unit normal vector to $u_{q,i-1}$ and $\gamma_1,\gamma_2$ are suitable periodic functions.\footnote{In fact, the corrugations used in \cite{Kuiper} and \cite{CDS} have the more complicated expression \[
 W_{q+1,i} = \frac{1}{\lambda_i}\left( \Gamma_1(a_i,\lambda_i x\cdot \nu_i)t_i+ \Gamma_2(a_i, \lambda_i x\cdot \nu_i)\zeta_i\right)\] 
 for some  suitable functions $\Gamma_1,\Gamma_2$ periodic in the second component, see also \cite{Laszlolecturenotes}.} 

By choosing the oscillation frequency large enough, one can ensure a geometric decay of the metric deficit:  
\[
\|g - Du_{q+1}^tDu_{q+1} \|_0 \leq \frac{1}{K} \|g - Du_q^tDu_q\|_0.
\]
Meanwhile, the step-wise corrections yield a bound on the $C^1$-norm growth:  
\begin{equation}\label{e:C1intro}
\|u_{q+1} - u_q\|_1 \leq C \sum_{i=1}^N \|a_i\|_0 \leq C \|g - Du_q^tDu_q\|_0^{\sfrac{1}{2}}
\end{equation}
where the last estimate follows from \eqref{e:decompintro}.
Given the geometric decay in $\|g - Du_q^tDu_q\|_0$, this ensures that the sequence $u_q$ remains Cauchy in $C^1$, converging to a limiting function that is an isometric immersion.  

\subsubsection{Borisov’s exponent and the result of \cite{CDS}}\label{s:borisov}

To achieve convergence in $C^{1,\alpha}$, a refined choice of frequencies is required to control the blow-up of the sequence $\{\|u_q\|_2\}$. The error term $E_i$ in the $i$-th step can be shown to satisfy  
\[
\|E_i\|_0 \leq C\|g - Du_q^tDu_q\|_0 \frac{\lambda_{i-1}}{\lambda_i}.
\]
Setting $\lambda_i = K\lambda_{i-1}$, we achieve the geometric decay  
\[
\|g - Du_{q+1}^tDu_{q+1}\|_0 \leq C K^{-1} \|g - Du_q^tDu_q\|_0,
\]
but at the cost of a $C^2$-norm growth of order $K^N$, where $N$ is the number of primitive metrics required in the decomposition  \eqref{e:decompintro}. This leads to  
\[
\|u_{q+1} - u_q\|_2 \leq C(K^N)^q\|u_1 - u_0\|_2.
\] Using \eqref{e:C1intro} and interpolation estimates, the sequence converges in $C^{1,\alpha}$ for any
\[
\alpha < \frac{1}{1+2N}.
\]
If the metric deficit $g - Du_q^tDu_q$ is close to a constant positive definite matrix (which can be achieved by a rescaling), it can be decomposed in exactly $N=  n(n+1)/2$ primitive metrics. This provides a heuristic explanation of Borisov's exponent \eqref{e:Borisovsexponent} and captures the core idea of the proof in \cite{CDS}. A technical challenge in turning  this idea into a rigorous proof lies in the loss of derivatives appearing in the above estimates. In \cite{CDS} and subsequent works, this loss is managed through an additional mollification step at the beginning of each stage.

\subsubsection{Prior improvements}

Borisov's exponent \eqref{e:Borisovsexponent} has been improved  previously via the following three approaches. We will discuss and show below the obstacles applying them to  attain Theorem \ref{t:main}.

(1) \textit{Reducing the number $N$ in the decomposition \eqref{e:decompintro}.} In \cite{DIS}, the authors studied  the case $n=2$, where isothermal coordinates can be used to diagonalize the metric deficit $g-Du_q^tDu_q$. This reduces $N$ from 3 to 2, leading to the improved H\"older exponent  $\alpha < 1/5$. However, for $n\geq 3$, no analogous coordinate transformation is known. 

(2) \textit{Increasing codimension.} When the codimension $m-n$ is large, multiple perturbations can be applied simultaneously at the same frequency, thereby reducing the growth of the $C^2$ norm of $\{u_q\}$. A procedure analogous to that used in \cite{MartaSystems} in the context of the Monge-Amp\`ere system would yield the improved exponent 
\[
\alpha < \frac{1}{1+2\frac{n_*}{m-n}}.
\] 
But our setting is codimension one.

(3) \textit{Absorbing the error terms $E_i$ directly into the decomposition \eqref{e:decompintro}.}  For large codimension $m - n \geq 2n_*$, from the second approach, it seems that at most $\alpha<1/2$ can be obtained. However, K\"all\'en \cite{Kaellen} introduced a new method that incorporates error terms into the decomposition:  
\[
g- Du_q^tDu_q = \sum_{i=1}^N a_i^2 \nu_i \otimes \nu_i + \sum_{i=1}^N E_i.
\]
Since the error terms $E_i$ depend on $a_i$ and its derivatives, this absorption is nontrivial. However, by applying a suitable Picard iteration to this equation, the decomposition can be achieved up to an arbitrarily small error 
\[\|E\|_0 \leq C \|g- Du_q^tDu_q\|_0\left(\frac{\lambda_0}{\lambda_1}\right)^J.\] 
Consequently, by choosing a very slow increase in frequency, $\lambda_1 = K^{1/J} \lambda_0$, the exponent improves to 
\[
\alpha < \frac{1}{1+\frac{2}{J}}.
\]
By taking $J$ arbitrarily large, one can achieve any $\alpha < 1$.  

When codimension $m-n\geq1$, absorbing error terms into the decomposition does not work, since the perturbations take the form of corrugations rather than Nash's spirals as in \cite{Kaellen}, and the structure of the error terms prevents the Picard iteration procedure from working. Nevertheless, in the context of the 2-dimensional Monge-Amp\`ere equation (corresponding to $n=2, m=3$), \cite{CHI} observed that the Picard scheme still works if applied only to $E_1$ with its 22 component removed. This modification allows for the trade of a fast derivative for a slow one, making the scheme work. The missing 22 component is then cancelled exactly by the second perturbation. 
To apply the analogous strategy to codimension one isometric immersions is significantly more challenging due to the nonlinear nature of the required equation. Furthermore, although the approach seems to generalize  to higher dimensions, the resulting improvement on the H\"older exponent is minor.

\subsubsection{Our approach--iterative integration by parts}\label{s:ibp} 

In this paper, we improve the H\"older exponent in the codimension one setting \emph{without} reducing the number of primitive metrics in the decomposition of the metric deficit and (almost) without absorbing error terms into the decomposition. Instead, we modify the ansatz \eqref{e:ansatzintro} for the corrugation in the first $ n $ perturbations, achieving an error with the very small bound:  
\[
\|E_i\|_0 \leq C\|g - Du_q^tDu_q\|_0 \left(\frac{\lambda_{i-1}}{\lambda_i}\right)^J
\]
up to a component that belongs to an $(n_* - n)$-dimensional subspace of the space of symmetric matrices. This remaining part is then cancelled exactly in the last $ n_* - n $ perturbations. By choosing $ J $ arbitrarily large, the contribution of the first $ n $ perturbations to the growth of the $C^2$ norm becomes negligible, effectively reducing the number of contributing perturbations to $ n_* - n $. This explains the H\"older threshold claimed in Theorem \ref{t:main}.

The key observation behind modifying the ansatz \eqref{e:ansatzintro} is that, at each step $ i $, all but one of the leading-order error terms in $E_i$ take the form  
\[
\gamma(\lambda_i x \cdot \nu_i) M,
\]  
where $ M $ is a symmetric matrix oscillating at the lower frequency $ \lambda_{i-1} $, and $ \gamma $ is a periodic function with zero mean.  
Another observation is that for a fixed $ \nu \in \mathbb{S}^{n-1} $, any symmetric matrix can be decomposed as  
\[
M = \sym (\alpha(M) \otimes \nu) + F,
\]
where the ``remainder" $ F $ belongs to an $(n_* - n)$-dimensional subspace of the space of symmetric matrices. Using this decomposition, we can rewrite  
\[
\gamma(\lambda_i x\cdot \nu_i) M = 2\sym\left( D\left( \frac{\gamma^{(1)}(\lambda_i x\cdot\nu_i)}{\lambda_i} \alpha(M)\right)\right) - 2\frac{\gamma^{(1)}}{\lambda_i} \sym(D\alpha(M)) + \gamma  F.
\]  
Here, $ \gamma^{(1)} $ is an antiderivative of $ \gamma $ with zero mean. We refer to this process as \emph{integration by parts}.  
If $M$ and $\gamma$ are sufficiently smooth, we can iterate this process $ J $ times, and construct fields $ w^J, E^J, F^J $ such that  
\[
\gamma M = 2\sym (D w^J) + \left(\frac{\lambda_{i-1}}{\lambda_i}\right)^J E^J+ F^J,
\]  
where the remainder $ F^J $ remains in a lower-dimensional subspace (see Proposition \ref{p:ibp}). Consequently, introducing vector field $w^J$ in the step ansatz so that $ 2\,\sym(D w^J) $ appears in the induced metric allows us to cancel $\gamma M$ up to a very small error $ \left(\frac{\lambda_{i-1}}{\lambda_i}\right)^J E^J $ and the large but lower-dimensional remainder $ F^J $.  Furthermore, by selecting a suitable basis $ \{\nu_i \otimes \nu_i\} $ for the decomposition \eqref{e:decompintro} we ensure that, when applied to the first $n$ perturbations (i.e., for $ \nu = \nu_i $, $ i = 1, \dots, n $), the integration by parts procedure produces errors $ F^J $ that all remain within the same subspace $\mathcal{V}= \mathrm{span} \{\nu_j \otimes \nu_j : j = n+1, \dots, n_*\} $. These errors in $\mathcal{V}$ can then be canceled exactly by appropriately adjusting the effective amplitude $ a_j $ of the perturbation $ W_{q,j} $ for $ j \geq n+1 $.


A complication arises due to a specific leading-order error term in $E_i$ of the form  
\[
\gamma^2 M,
\]  
which prevents a direct application of integration by parts technique since $ \gamma^2 $ does not have zero mean. However, we can decompose  
\[
\gamma^2 M = (\gamma^2 - \fint \gamma^2)M + \fint \gamma^2 M.
\]  
The prefactor of the first term has zero mean, allowing the first term to be handled using the integration by parts process described above. On the other hand, the remaining term $ \fint \gamma^2 M $ oscillates slowly, so that can be treated by absorbing it into the decomposition via a simple Picard iteration, similar to K\"all\'en’s approach (see Lemma \ref{l:babykaellen}).

\subsection{Structure of the remaining part of the paper} 
In Section \ref{s:preliminaries} we gather decomposition lemmas and  a proposition for iterative integration by parts. 
We then introduce the new ansatz for the perturbation in Section \ref{s:iteration} (see \eqref{d:perturbation}). It differs from the perturbations used in previous works by the introduction of $w$ into the tangential component of the perturbation,  a modification inspired by the closely related iteration process for the Monge-Amp\`ere equation. Additionally, the tangential component is smaller than usual  (observe the $\delta$ instead of the usual $\delta^{\sfrac{1}{2}}$ in front of it). 
We then apply this general ansatz in the two distinct cases: when the direction $\nu_i$ corresponds to $i=1,\ldots,n$ and when $i\geq n+1$. In the first case, we use   
integration by parts to obtain a suitably small error, as shown in Proposition  \ref{p:ibpstep}. In the second case we simply set $w=0$ and recover the usual stepwise bounds (Proposition \ref{p:usualstep}). We then establish the main iterative result, Proposition \ref{p:stage}. 
Finally, we complete the proof of Theorem \ref{t:main} in Section \ref{s:pfoftmain} by iteratively applying Proposition \ref{p:stage}. 

\subsection{Notation} For $A\in \R^{n\times n} $ we set $\sym(A) = \frac{1}{2}(A+A^t)$, where $A^t$ is the transpose of $A$. Denote $\Sym$ as the set of all $n\times n$ symmetric matrices. For $\zeta,\xi \in \R^n$ we set $\zeta\odot \xi = \sym(\zeta\otimes \xi) $. $C$ represents a positive constant which may differ from line to line, and its value may rely on the parameters specified.

\section{Preliminaries}\label{s:preliminaries}
\subsection{Decomposition lemmas.}
As discussed in the introduction, we want to decompose the metric deficit at the beginning of each stage as in \eqref{e:decompintro}. Because of the integration by parts procedure, we first fix a favorable basis $\{\nu_i\otimes\nu_i\}$ of $\Sym$. 

Let $e_i\in \R^n$ denote the $i$-th standard basis vector. We choose the  basis $\{\nu_i\otimes \nu_i : i=1,\ldots, n_*\}$ of $\Sym$, where 
\[\{\nu_i:i=1,\ldots, n_*\}=\left\{\frac{e_i+e_j}{|e_i+e_j|}, 1\leq i,j\leq n\right\}.\]
Moreover, we order $\{\nu_i\}$ such that $\nu_j \cdot e_1 \neq 0 $ for $j=1,\ldots, n$. Fix the positive definite matrix
\begin{equation}\label{d:h_0}
h_0 =\sum_{j=1}^{n_*} \nu_j\otimes \nu_j\,.\end{equation}

We then have the following decomposition lemma (compare also \cite[Lemma 5.2]{CDS}).
\begin{lemma}\label{l:matrixdecomposition} 
There exist $r_1=r_1(n)>0$ and linear maps $L_j: \Sym \to \R$ such that 
\[ h= \sum_{j=1}^{n_*} L_j(h) \nu_j\otimes \nu_j \]
for all $h\in \Sym$, and moreover $L_j(h)\geq r_1$ if $|h-h_0|\leq r_1$.
\end{lemma}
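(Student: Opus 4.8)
The plan is to exploit the fact that $\{\nu_j \otimes \nu_j : j = 1, \dots, n_*\}$ is, by construction, a \emph{basis} of the $n_*$-dimensional vector space $\Sym$. Indeed, one first checks that the $n_*$ matrices $\nu_j \otimes \nu_j$ are linearly independent: the diagonal choices $\nu = e_i$ give the matrices $e_i \otimes e_i$, and the off-diagonal choices $\nu = (e_i+e_j)/\sqrt{2}$ give $\tfrac12(e_i\otimes e_i + e_j \otimes e_j + 2\, e_i \odot e_j)$, so modulo the span of the diagonal ones these produce exactly the elementary symmetric matrices $e_i \odot e_j$ for $i < j$; together these span all of $\Sym$, and since there are exactly $n_* = n(n+1)/2$ of them, they form a basis. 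Consequently there is a unique family of linear functionals $L_j : \Sym \to \R$ (the dual basis, up to identifying $\Sym$ with its dual) such that $h = \sum_{j=1}^{n_*} L_j(h)\, \nu_j \otimes \nu_j$ for every $h \in \Sym$; this is just the coordinate representation of $h$ in the chosen basis. This establishes the identity for all $h \in \Sym$ and defines the maps $L_j$.

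It remains to prove the quantitative positivity statement. By \eqref{d:h_0} we have $h_0 = \sum_{j=1}^{n_*} \nu_j \otimes \nu_j$, i.e.\ $L_j(h_0) = 1$ for every $j$. Set $r_0 := \min\{1, \tfrac12\}= \tfrac12$ as a first guess and estimate: since each $L_j$ is a linear functional on the finite-dimensional normed space $(\Sym, |\cdot|)$, it is bounded, say $|L_j(h)| \le C_L |h|$ for all $h$, with $C_L = C_L(n)$ depending only on $n$ (it is the norm of the dual basis vectors). Then for any $h$ with $|h - h_0| \le \rho$ we get
\[
L_j(h) = L_j(h_0) + L_j(h - h_0) \ge 1 - C_L \rho .
\]
Choosing $\rho$ small enough that $1 - C_L \rho \ge \tfrac12$, i.e.\ $\rho \le 1/(2C_L)$, yields $L_j(h) \ge \tfrac12$ for all such $h$ and all $j$. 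Finally set $r_1 := \min\{1/(2C_L), \tfrac12\}$; then $|h - h_0| \le r_1$ forces $|h - h_0| \le 1/(2C_L)$, hence $L_j(h) \ge \tfrac12 \ge r_1$, which is the claimed bound (one could equally well take $r_1$ to be any sufficiently small positive constant and read off $L_j(h) \ge r_1$ directly). All constants depend only on $n$, as required.

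The only real content is the linear-algebra claim that the $\nu_j \otimes \nu_j$ are linearly independent; once that is in hand, everything else is the elementary continuity of linear functionals together with the normalization $L_j(h_0) = 1$. The main (minor) obstacle is therefore to present the independence cleanly — I would do this by the triangular argument sketched above (first subtract off the span of the $e_i \otimes e_i$, then observe the remaining combinations reduce to the $e_i \odot e_j$, $i<j$), or alternatively by a direct determinant computation, though the former is more transparent and dimension-independent. One should double-check that the extra ordering requirement $\nu_j \cdot e_1 \ne 0$ for $j = 1, \dots, n$ can be met simultaneously with the basis property: taking $\nu_j = (e_1 + e_j)/|e_1+e_j|$ for $j = 1, \dots, n$ (interpreting $e_1 + e_1 = 2e_1$, so $\nu_1 = e_1$) already gives $n$ directions all satisfying $\nu_j \cdot e_1 \ne 0$, and these $n$ matrices $\nu_j \otimes \nu_j$ are themselves linearly independent; the remaining $n_* - n$ directions $(e_i + e_j)/|e_i+e_j|$ with $2 \le i < j \le n$ complete the basis. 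This does not affect the argument above.
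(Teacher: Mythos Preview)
Your proposal is correct and follows essentially the same approach as the paper: the paper's proof simply states that the existence of the $L_j$ follows from $\{\nu_j\otimes\nu_j\}$ being a basis of $\Sym$, and that the existence of $r_1$ follows from continuity together with $L_j(h_0)=1$. You have supplied the details the paper leaves implicit---the triangular argument for linear independence and the explicit use of boundedness of the dual functionals---but there is no substantive difference in strategy.
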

\begin{proof}
    The existence of $L_j$ follows from the fact that $\{\nu_j\otimes \nu_j\}$ is a basis for $\Sym$. The existence of $r_1$ follows by continuity and the fact that $L_j(h_0) = 1 $ for all $j$. 
\end{proof}

As discussed in Section \ref{s:ibp}, not all leading order error terms are amenable to integration by parts. Nevertheless, we can absorb the remainder into the decomposition with arbitrarily high precision through the following K\"all\'en-type decomposition. Notice  that there is no rapidly oscillating factor in front of the error term to be absorbed, simplifying the proof. 

\begin{lemma}\label{l:babykaellen} Let $N\in \N$. There exists $r_2=r_2(n, N)>0$ such that the following holds for any $1\leq \lambda_0\leq \lambda_1\leq \cdots \leq \lambda_n$ and $H\in C^{N+1}(\bar \Omega,\Sym)$ satisfying
\begin{align}
    &\|H-h_0\|_0 + \frac{\lambda_0}{\lambda_1}< r_2\,,  \label{a:H}\\ 
    &\|H\|_k \leq \lambda_0^k \text{ for } k=1,\ldots, N+1.\label{a:Hestimates}
\end{align}
    For any $j=0,\ldots, N$ there exists a vector $a^j =(a^j_1,\ldots, a^j_{n_*}) \in C^{N+1-j}(\bar \Omega, \R^{n_*})$ and $E^j\in C^{N-j}(\bar \Omega, \Sym)$ such that 
    \begin{equation}\label{e:Kaellendecomp}
        H= \sum_{i=1}^{n_*} (a^j_i)^2 \nu_i\otimes \nu_i + \sum_{l=1}^n \frac{2\pi}{\lambda_l^2}\nabla a^j_l\otimes \nabla a^j_l + E^j\,,
    \end{equation}
    and the estimates 
     \begin{align}
         &a^j_i\geq r_2, \text{ for }i=1,\ldots, n_*\,,\label{e:ajlower}\\
         &\|a^j\|_k \leq C \lambda_0^k \text{ for }k=0,\ldots, N-j+1\,,\label{e:ajestimate}\\
         & \|E^j\|_k \leq C \left(\frac{\lambda_0}{\lambda_1}\right)^{2(j+1)}\lambda_0^k \text{ for } k=0,\ldots,N-j \,,\label{e:ejestimate}
     \end{align}
     hold. The constant $C$ only depends on $n, N$. 
\end{lemma}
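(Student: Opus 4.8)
The plan is to construct the decomposition by a Picard-type iteration in the index $j$, exactly as in K\"all\'en's approach but simplified by the absence of an oscillating prefactor. I would start from $j=0$: apply Lemma \ref{l:matrixdecomposition} directly to $H$ (which is admissible since $\|H-h_0\|_0 < r_2 \le r_1$ once $r_2$ is chosen small enough), setting $a^0_i := \sqrt{L_i(H)}$ and $E^0 := -\sum_{l=1}^n \tfrac{2\pi}{\lambda_l^2}\nabla a^0_l\otimes\nabla a^0_l$. The lower bound \eqref{e:ajlower} at level $0$ follows from $L_i(H)\ge r_1$, and the estimate \eqref{e:ajestimate} follows from \eqref{a:Hestimates} together with the chain rule and the fact that $L_i$ is linear (so $\|a^0\|_k$ is controlled by $\|H\|_k$ up to constants depending on $n,N$ and the lower bound $r_1$, which keeps $\sqrt{\cdot}$ smooth). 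The bound \eqref{e:ejestimate} for $E^0$ with exponent $2$ follows because each $\nabla a^0_l$ contributes a factor $\lambda_0$ and the prefactor $\lambda_l^{-2}\le \lambda_1^{-2}$, so $\|E^0\|_k \lesssim \lambda_1^{-2}\lambda_0^{k+2} = (\lambda_0/\lambda_1)^2\lambda_0^k$.

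Next I would set up the inductive step: given $a^j, E^j$ satisfying \eqref{e:Kaellendecomp}--\eqref{e:ejestimate}, I define the next iterate by feeding $H - E^j$ into Lemma \ref{l:matrixdecomposition}, i.e. I set
\[
(a^{j+1}_i)^2 := L_i\!\left(H - E^j - \sum_{l=1}^n \tfrac{2\pi}{\lambda_l^2}\nabla a^j_l\otimes \nabla a^j_l\right),
\]
and then define $E^{j+1}$ so that \eqref{e:Kaellendecomp} holds at level $j+1$, namely
\[
E^{j+1} := \sum_{l=1}^n \tfrac{2\pi}{\lambda_l^2}\Big(\nabla a^j_l\otimes\nabla a^j_l - \nabla a^{j+1}_l\otimes\nabla a^{j+1}_l\Big).
\]
One checks by induction that the argument of $L_i$ stays within $r_1$ of $h_0$ (using $\|E^j\|_0$ small and $\lambda_l^{-2}\|\nabla a^j_l\|_0^2 \lesssim (\lambda_0/\lambda_1)^2$ small, both absorbed into the smallness of $r_2$), so $a^{j+1}_i \ge \sqrt{r_1}\ge r_2$ and \eqref{e:ajlower} persists. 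The estimate \eqref{e:ajestimate} at level $j+1$ (valid for $k$ up to $N-j$, i.e. one derivative fewer, matching the regularity claims $C^{N+1-j}$) follows from differentiating the defining relation: the new contributions $E^j$ and $\nabla a^j_l\otimes\nabla a^j_l$ are controlled at scale $\lambda_0$, and $\sqrt{\cdot}$ applied above a fixed positive floor is smooth. The crucial gain is \eqref{e:ejestimate}: writing $\nabla a^j_l\otimes\nabla a^j_l - \nabla a^{j+1}_l\otimes\nabla a^{j+1}_l = \nabla(a^j_l - a^{j+1}_l)\otimes\nabla a^j_l + \nabla a^{j+1}_l\otimes\nabla(a^j_l-a^{j+1}_l)$, and observing from the defining relations that $(a^j_l)^2-(a^{j+1}_l)^2 = L_l(E^{j-1}+\text{quadratic differences})$ which is $O((\lambda_0/\lambda_1)^{2j})$ in $C^k$, hence $a^j_l - a^{j+1}_l = O((\lambda_0/\lambda_1)^{2j})$ as well (dividing by the positive floor), one gets $\|E^{j+1}\|_k \lesssim \lambda_1^{-2}\cdot \lambda_0^{k+1}(\lambda_0/\lambda_1)^{2j}\cdot\lambda_0 = (\lambda_0/\lambda_1)^{2(j+1)}\lambda_0^k$, which is the claimed bound with exponent $2(j+1)$.

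The main obstacle, and the step needing the most care, is the bookkeeping of derivative losses: each Picard step costs one derivative because the correction involves $\nabla a^j_l\otimes\nabla a^j_l$, so starting from $H\in C^{N+1}$ we can only run $N$ steps, and at level $j$ we have $a^j\in C^{N+1-j}$, $E^j\in C^{N-j}$. I would set this up cleanly by making the inductive hypothesis carry all three estimates \eqref{e:ajlower}, \eqref{e:ajestimate}, \eqref{e:ejestimate} for the allowed range of $k$, and verify the range shrinks by exactly one at each step. A secondary technical point is choosing $r_2 = r_2(n,N)$: it must be small enough that (i) $r_2 \le r_1$, (ii) along the whole finite chain of $N$ iterations the accumulated perturbations $E^j$ and quadratic terms stay within the ball of radius $r_1$ where Lemma \ref{l:matrixdecomposition} gives positivity, and (iii) the constants $C$ in \eqref{e:ajestimate}--\eqref{e:ejestimate}, which grow with $j$ but only finitely (there are at most $N+1$ levels), can be absorbed into a single $C=C(n,N)$. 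Since everything is a finite iteration with geometric smallness $(\lambda_0/\lambda_1)^2 < r_2^2$ at each stage, no summability issue arises and the argument closes.
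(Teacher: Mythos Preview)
Your overall strategy---a finite Picard iteration starting from Lemma~\ref{l:matrixdecomposition}---is exactly the paper's, and your base case and the bookkeeping of derivative losses are correct. There is, however, a genuine slip in the inductive formula for $a^{j+1}$. As written you set $(a^{j+1}_i)^2 = L_i\bigl(H - E^j - \sum_l \tfrac{2\pi}{\lambda_l^2}\nabla a^j_l\otimes\nabla a^j_l\bigr)$; but applying $L_i$ to the level-$j$ identity \eqref{e:Kaellendecomp} shows that this quantity equals $(a^j_i)^2$ identically. Your iteration is therefore stationary, $a^{j+1}=a^j$, and no gain in $E^j$ can occur. (Consistently, your formula $E^{j+1}=p^j-p^{j+1}$ would then give $E^{j+1}=0$, while the decomposition at level $j+1$ would fail by exactly $E^j$; the two formulas you wrote for $a^{j+1}$ and $E^{j+1}$ are not compatible.)

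The fix, which is what the paper does, is simply to \emph{drop} the $-E^j$: set $(a^{j+1}_i)^2 := L_i(H - p^j)$ with $p^j:=\sum_l\tfrac{2\pi}{\lambda_l^2}\nabla a^j_l\otimes\nabla a^j_l$. Then your expression for $E^{j+1}$ becomes correct, and since \eqref{e:Kaellendecomp} at level $j$ gives $a^j_l=\sqrt{L_l(H-p^j-E^j)}$, one finds
\[
a^{j+1}_l-a^j_l = \frac{L_l(E^j)}{\sqrt{L_l(H-p^j)}+\sqrt{L_l(H-p^j-E^j)}}\,,
\]
so the difference is controlled by $E^j$ (not $E^{j-1}$ plus ``quadratic differences''). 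This yields $\|a^{j+1}-a^j\|_{k+1}\lesssim (\lambda_0/\lambda_1)^{2(j+1)}\lambda_0^{k+1}$ and hence $\|E^{j+1}\|_k\lesssim (\lambda_0/\lambda_1)^{2(j+2)}\lambda_0^k$, which is the bound \eqref{e:ejestimate} at level $j+1$; note the target exponent there is $2(j+2)$, not the $2(j+1)$ you arrived at. With this single correction the rest of your outline closes.
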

\begin{proof}
We prove it by induction. Let $j=0$. If $r_2<r_1$ we can apply Lemma \ref{l:matrixdecomposition} to write 
\[ H= \sum_{i=1}^{n_*} (a_i^0)^2\nu_i\otimes \nu_i =\sum_{i=1}^{n_*} (a^0_i)^2 \nu_i\otimes \nu_i + \sum_{l=1}^n \frac{2\pi}{\lambda_l^2}\nabla a^0_l\otimes \nabla a^0_l + E^0 \]
for 
\[ a_i^0 = \sqrt{ L_i(H)}\,,\, E^0 = -\sum_{l=1}^n \frac{2\pi}{\lambda_l^2}\nabla a^0_l\otimes \nabla a^0_l \,.\]
Note that $a_i^0$ is well-defined and $C^{N+1}$ due to $L_i(H) \geq r_1 >0$ as a consequence of Lemma \ref{l:matrixdecomposition}, and that \eqref{e:ajlower} holds if $r_2\leq \sqrt{r_1}$. It is easy to derive from the assumption \eqref{a:H} that 
\[\|H\|_0\leq \|h_0\|_0+\|H-h_0\|_0\leq C,\]
where $C$ depends only on $n$ and $N$. Along with assumption \eqref{a:Hestimates}, estimate \eqref{e:ajestimate} then follows from Lemma \ref{l:composition}, while \eqref{e:ejestimate} follows from the just established \eqref{e:ajestimate} and the Leibniz rule \eqref{e:Leibniz}.

 Assume now that Lemma \ref{l:babykaellen} holds until some $j\leq N-1$. We will show it also holds for $j+1$. In fact, with $a^j$ already defined, by \eqref{e:ajestimate}, we get
\[\|p^j\|_0\leq C\frac{\lambda^2_0}{\lambda_1^2},\]
where we denoted \[ p^j=\sum_{l=1}^n\frac{2\pi}{\lambda_l^2}\nabla a^j_l\otimes \nabla a^j_l.\]
Together with assumption \eqref{a:H} this implies
\[\|H-p^j-h_0\|_0\leq \|H-h_0\|_0+C\frac{\lambda^2_0}{\lambda_1^2}\leq r_2+Cr_2^2\leq r_1,\]
provided that $r_2$ is small enough depending only on $N$ and $n$. By Lemma \ref{l:matrixdecomposition}, we can therefore decompose 
\[H-p^j=\sum_{i=1}^{n_*}(a_i^{j+1})^2\nu_i\otimes\nu_i,\]
for \[\displaystyle  a_i^{j+1}=\sqrt{L_i\left(H-p^j\right)}\,.\]
Again, this is well defined by Lemma \ref{l:matrixdecomposition}, and \eqref{e:ajlower} holds if $r_2\leq \sqrt{r_1}$.  Moreover, the estimates \eqref{e:ajestimate} follow from Lemma \ref{l:composition}, using the induction assumption and assumption \eqref{a:Hestimates}. On the other hand, setting
\[E^{j+1}=\sum_{l=1}^n\frac{2\pi}{\lambda_l^2}(\nabla a^j_l\otimes \nabla a^j_l-\nabla a^{j+1}_l\otimes \nabla a^{j+1}_l)\,\]
 yields \eqref{e:Kaellendecomp}.
Regarding \eqref{e:ejestimate} we can estimate, using \eqref{e:ajestimate},
\begin{align*}
    \|E^{j+1}\|_k\leq &\sum_{l=1}^n\frac{2\pi}{\lambda_l^2}\|\nabla a^j_l\otimes \nabla a^j_l-\nabla a^{j+1}_l\otimes \nabla a^{j+1}_l\|_k\\
    \leq& \frac{C}{\lambda_1^2}\sum_{l=1}^n\left(\|\nabla a_l^j\otimes(\nabla a_l^j-\nabla a_l^{j+1})\|_k+\|(\nabla a_l^j-\nabla a_l^{j+1})\otimes \nabla a_{l}^{j+1}\|_k\right)\\
    \leq& \frac{C}{\lambda_1^2}\sum_{l=1}^n\left(\lambda_0\|(\nabla a_l^j-\nabla a_l^{j+1})\|_k+\lambda_0^{k+1}\|(\nabla a_l^j-\nabla a_l^{j+1})\|_0\right)\,.
\end{align*}
From \eqref{e:Kaellendecomp} and the fact that $L$ is linear it follows 
\[ a^j_l = \sqrt{L_l(H-p^j-E^j)}\,,\]
so that
\begin{align*}
    a_l^{j+1}-a_l^j
    =&\sqrt{L_l\left(H-p^j\right)}-
    \sqrt{L_l\left(H-p^{j}-E^j\right)}\\
     =& \frac{L_l(E^j)}{\sqrt{L_l\left(H-p^j\right)}+\sqrt{L_l\left(H-p^{j}-E^j\right)}}\,.
\end{align*}
By the Leibniz rule, Lemma \ref{l:composition} and estimates \eqref{e:ajlower}--\eqref{e:ejestimate} we obtain
\begin{align*}
     \|a_l^j-a_l^{j+1}\|_{k+1}\leq& C\Big(\big(\|H\|_{k+1}+\|p^{j}\|_{k+1}+\|E^j\|_{k+1}\big)\|E^j\|_0+\|E^j\|_{k+1}\Big)\\
     \leq&C\|E^j\|_0\big(\lambda_0^{k+1}+\|\nabla a_l^{j-1}\otimes\nabla a^{j-1}_l\|_{k+1}\lambda_1^{-2}\big)\\
     &\quad +C\left(\frac{\lambda_0}{\lambda_1}\right)^{2(j+1)}\lambda_0^{k+1}\|E^j\|_0+C\|E^j\|_{k+1}\\
\leq&C\left((\lambda_0^{k+1}+\lambda_0^{k+3}\lambda_1^{-2})+\lambda_0^{k+1}\right)\left(\frac{\lambda_0}{\lambda_1}\right)^{2(j+1)}\\
     \leq& C\lambda_0^{k+1}\left(\frac{\lambda_0}{\lambda_1}\right)^{2(j+1)}\,
\end{align*}
for all $k=0,\ldots,N-j-1$. Combining with the above yields
\[ \|E^{j+1}\|_k\leq \frac{C}{\lambda_1^2}\lambda_0^{k+2}\left(\frac{\lambda_0}{\lambda_1}\right)^{2(j+1)}\,,\]
for $k=0,\ldots, N-j-1$,
which gives \eqref{e:ejestimate} for $j+1$ case. This completes the proof.
\end{proof}

\subsection{Iterative integration by parts} We now introduce the key integration by parts procedure in Proposition \ref{p:ibp}. Before doing so we need the following algebraic decomposition lemma.
\begin{lemma}\label{l:ibpdecomposition}
    Let $\nu\in \S^{n-1}$ such that $\nu\cdot e_1 \neq 0$. Then the linear map $\Phi:\R^n \times \R^{n_*-n}\to \Sym$ defined by 
    \[ \Phi(\alpha,\beta) = \alpha \odot \nu + \sum_{j=n+1}^{n_*} \beta_{j-n} \nu_j\otimes \nu_j \]
    is an isomorphism with inverse denoted by \[\Psi(M) := (\alpha(M),\beta(M)) := (\alpha(M),\beta_1(M),\ldots, \beta_{n_*-n}(M) ).\]
\end{lemma}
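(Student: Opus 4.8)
~\textbf{Proof proposal.}

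The statement to prove is that $\Phi:\R^n\times\R^{n_*-n}\to\Sym$ is a linear isomorphism. Since $\dim(\R^n\times\R^{n_*-n}) = n + (n_*-n) = n_* = \dim\Sym$, it suffices to show that $\Phi$ is injective, i.e. that its kernel is trivial. So the plan is: assume $\Phi(\alpha,\beta)=0$ and deduce $\alpha=0$ and $\beta=0$. Write the equation out as
\[
\alpha\odot\nu = -\sum_{j=n+1}^{n_*}\beta_{j-n}\,\nu_j\otimes\nu_j =: -F,
\]
so $F$ lies in $\mathcal V=\mathrm{span}\{\nu_j\otimes\nu_j: j=n+1,\ldots,n_*\}$ and simultaneously in the image of $\alpha\mapsto\alpha\odot\nu$. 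I would first argue that $\alpha\odot\nu\in\mathcal V$ forces $\alpha=0$; then $F=0$, and since $\{\nu_j\otimes\nu_j:j=n+1,\ldots,n_*\}$ is part of the chosen basis of $\Sym$ (hence linearly independent), $\beta=0$ follows immediately.

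The crux is therefore the first claim. Here I would exploit the specific structure of the basis $\{\nu_i\}$: by construction the full family $\{\nu_i:i=1,\ldots,n_*\}$ consists of the normalized vectors $\frac{e_i+e_j}{|e_i+e_j|}$ for $1\le i\le j\le n$ (including $i=j$, which gives the $e_i$), it is ordered so that $\nu_k\cdot e_1\neq 0$ for $k=1,\ldots,n$, and $\nu=\nu_k$ for some such $k$. The set $\{\nu_j:j=n+1,\ldots,n_*\}$ consists precisely of the "mixed" directions $\frac{e_i+e_j}{|e_i+e_j|}$ with $i<j$ that do \emph{not} involve $e_1$, i.e. with $2\le i<j\le n$. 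The key elementary observation is that each $\nu_j\otimes\nu_j$ with $j\ge n+1$, expanded in the standard basis $\{e_a\otimes e_b\}$, is supported on indices $a,b\in\{2,\ldots,n\}$ — it has no entry in the first row or first column. Consequently every element of $\mathcal V$ has vanishing first row and first column. On the other hand, $\alpha\odot\nu = \tfrac12(\alpha\otimes\nu+\nu\otimes\alpha)$ has $(1,b)$-entry $\tfrac12(\alpha_1\nu_b+\nu_1\alpha_b)$; requiring the whole first row to vanish and using $\nu_1=\nu\cdot e_1\neq 0$ gives $\alpha_b = -(\alpha_1/\nu_1)\nu_b$ for all $b$, i.e. $\alpha=\lambda\nu$ with $\lambda=-\alpha_1/\nu_1$; but then the $(1,1)$-entry is $\alpha_1\nu_1 = \lambda\nu_1^2$, which vanishes only if $\lambda=0$, hence $\alpha=0$.

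I expect the main (very mild) obstacle to be purely bookkeeping: making precise which normalized sums $\frac{e_i+e_j}{|e_i+e_j|}$ end up among the last $n_*-n$ indices and verifying that none of them has a nonzero first component, so that $\mathcal V$ genuinely sits inside the "no first row/column" subspace. Once that is pinned down — it is immediate from the stated ordering convention $\nu_k\cdot e_1\neq0$ for $k\le n$ together with a count ($n$ of the $n_*$ pairs $\{i,j\}$ involve the index $1$) — the linear algebra above closes the argument, and naming the inverse $\Psi(M)=(\alpha(M),\beta(M))$ is just notation. An alternative, coordinate-free route would be to check directly that the $n_*$ matrices $\{\nu_k\otimes\nu_k : k\le n\}\cup\{e_1\odot e_a : a\}$ are related to the $\nu_k\odot\nu$'s, but the dimension-count-plus-injectivity argument above is the cleanest.
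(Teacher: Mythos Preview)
Your argument is correct and follows essentially the same route as the paper: both reduce to injectivity by dimension count, use that $\nu_j\cdot e_1=0$ for $j\ge n+1$ to kill the first row/column of the $\mathcal V$-part, and then exploit $\nu\cdot e_1\neq 0$ to force $\alpha$ parallel to $\nu$ and hence zero, after which linear independence of the $\nu_j\otimes\nu_j$ gives $\beta=0$. Two small bookkeeping slips worth cleaning up: the lemma is stated for an \emph{arbitrary} $\nu\in\S^{n-1}$ with $\nu\cdot e_1\neq 0$, not only $\nu=\nu_k$ (your proof never uses the latter, so just drop that clause); and your description of $\{\nu_j:j\ge n+1\}$ as the mixed directions with $2\le i<j\le n$ omits the pure directions $e_2,\ldots,e_n$ (there are $\binom{n-1}{2}$ mixed ones but $n_*-n=\binom{n}{2}$ indices in total)---your counting argument at the end gets this right, so the earlier sentence should be corrected to match.
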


\begin{proof}
Due to the fact that $\Phi$ is a linear map from an $n_*$-dimensional vector space to another vector space of the same dimension, it is enough to show that $\Phi$ is injective. In the following we show the kernel of $\Phi$ is zero. In fact, assume $(\alpha, \beta)$ are such that
\[\Phi(\alpha, \beta)=\alpha \odot \nu + \sum_{j=n+1}^{n_*} \beta_{j-n} \nu_j\otimes \nu_j=0,.\]
Taking the dot product of both sides with $e_1$ we get
\[\alpha\odot\nu\cdot e_1=0,\]
which then implies 
\[\alpha=-\frac{\alpha\cdot e_1}{\nu\cdot e_1}\nu\]
by $\nu\cdot e_1\neq0.$ Thus we obtain
\[-\frac{\alpha\cdot e_1}{\nu\cdot e_1}\nu\otimes \nu + \sum_{j=n+1}^{n_*} \beta_{j-n} \nu_j\otimes \nu_j=0.\]
The linear independence of $\nu\otimes\nu$ and $\nu_j\otimes \nu_j, j=n+1, \ldots, n_*$ gives us 
$$\alpha\cdot e_1=0,\qquad \beta_j=0,\,\, j=1,\ldots, n_*-n,$$ 
which then implies
$(\alpha,\beta)=(0, 0).$ 
Therefore, the kernel of $\Phi$ is zero and then $\Phi$ is an isomorphism.
\end{proof}

The following proposition encapsulates the integration by parts process. The symmetric matrix $\gamma M$ represents a leading order error term within the error of after a step perturbation (which was denoted $E_i$ in Section \ref{s:mainideas}).

\begin{proposition}\label{p:ibp} 
Fix $N\in \N$, $\mu\geq 1 $, $K\geq 0$, and assume $M\in C^N(\overline \Omega, \Sym)$ satisfies
\begin{align} 
 &\|M\|_k \leq K \mu^k \,, \text{ for } k=0,\ldots, N\,. \label{assM}
 \end{align}
  Fix a vector $\nu\in \S^{n-1}$ with $\nu\cdot e_1 \neq 0$, a frequency $\lambda\geq \mu $,  and a smooth periodic function $\gamma\in C^\infty(\S^1)$ with zero mean. 
 
 Then for any natural number $i\in[1, N]$, there exist a vector field $w^i\in C^{N-i+1}(\overline \Omega, \R^n)$, matrix fields $E^i\in C^{N-i}(\overline \Omega, \Sym)$,  $F^i\in C^{N-i+1}(\overline \Omega,\Sym)$,  and a function $\gamma^i\in C^\infty(\S^1)$ with zero mean such that 
 \begin{equation}\label{e:ibpdecomposition}
 \gamma(\lambda x\cdot \nu ) M = 2\,\sym(D w^i) +\gamma^{i}(\lambda x\cdot \nu)\left(\frac{\mu}{\lambda}\right)^i E^i + F^i\,,
 \end{equation}
 and 
  \begin{equation}\label{e:spanproperty}
  F^i \in \mathrm{span}\{ \nu_j\otimes \nu_j: j=n+1,\ldots, n_*\}\,.\end{equation}
  Moreover, the following estimates are satisfied
\begin{align} 
  \|w^i\|_k &\leq C K \lambda^{k-1} \,,\text{ for } k=0,\ldots, N-i+1\,, \label{e:westimate} \\
 \|E^i\|_k &\leq C K \mu^{k}\,,\text{ for } k=0,\ldots,N-i\,,\label{e:Eestimate}\\
 \|F^i\|_k &\leq C K \lambda^k\,,\text{ for } k=0,\ldots, N-i+1\,,\label{e:Festimate}
\end{align}    
 where $C$ is a constant only depending on $N,\, n, \,\nu$ and $\gamma$.
 
\end{proposition}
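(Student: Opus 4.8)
The plan is to prove the decomposition \eqref{e:ibpdecomposition} by induction on $i$, where the single integration-by-parts step is exactly the algebraic identity outlined in Section \ref{s:ibp}. For the base case $i=1$, apply Lemma \ref{l:ibpdecomposition} pointwise to $M(x)$, writing $M = \alpha(M)\odot\nu + \sum_{j=n+1}^{n_*}\beta_{j-n}(M)\,\nu_j\otimes\nu_j$, where $\Psi = \Phi^{-1}$ is a fixed linear (hence smooth, with operator-norm-sized constants) map depending only on $n$ and $\nu$. Let $\gamma^{(1)}$ be the zero-mean antiderivative of $\gamma$ (which exists and is smooth and periodic precisely because $\gamma$ has zero mean) and note the chain rule identity
\[
D\!\left(\frac{\gamma^{(1)}(\lambda x\cdot\nu)}{\lambda}\,\alpha(M)\right) = \gamma(\lambda x\cdot\nu)\,\alpha(M)\otimes\nu + \frac{\gamma^{(1)}(\lambda x\cdot\nu)}{\lambda}\,D\alpha(M).
\]
Symmetrizing and using $\sym(\alpha(M)\otimes\nu) = \alpha(M)\odot\nu$ gives
\[
\gamma(\lambda x\cdot\nu)\,\alpha(M)\odot\nu = 2\,\sym D w^1 - 2\,\frac{\gamma^{(1)}(\lambda x\cdot\nu)}{\lambda}\,\sym(D\alpha(M)),
\]
with $w^1 := \frac{\gamma^{(1)}(\lambda x\cdot\nu)}{\lambda}\alpha(M)$. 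Hence, setting $F^1 := \gamma(\lambda x\cdot\nu)\sum_{j>n}\beta_{j-n}(M)\nu_j\otimes\nu_j$ — wait, this is not in the span unless we are more careful; in fact the correct choice is to absorb the slowly-varying coefficient differently. The cleaner route: take $F^1 := \sum_{j=n+1}^{n_*}\big(\gamma(\lambda x\cdot\nu)\beta_{j-n}(M)\big)\nu_j\otimes\nu_j$, which manifestly lies in $\mathrm{span}\{\nu_j\otimes\nu_j\}$ as required by \eqref{e:spanproperty}, and define $\gamma^1 := \gamma^{(1)}$, $E^1 := -2\,\sym(D\alpha(M))$ together with the rescaling $\mu/\lambda$; since $\|D\alpha(M)\|_k \le C\|M\|_{k+1} \le CK\mu^{k+1}$, we get $\|E^1\|_k \le CK\mu^{k+1} = (\lambda/\mu)\cdot CK\mu^k\cdot(\mu/\lambda)$... so to match the statement's normalization we simply pull out one factor of $\mu/\lambda$ and keep $CK\mu^k$ as the bound on the remaining factor, using $\lambda \ge \mu$.

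\textbf{Inductive step.} Suppose \eqref{e:ibpdecomposition}--\eqref{e:Festimate} hold at level $i < N$. The term $\gamma^i(\lambda x\cdot\nu)(\mu/\lambda)^i E^i$ has exactly the same structure as the original term $\gamma(\lambda x\cdot\nu)M$: a zero-mean periodic function composed with $\lambda x\cdot\nu$, times a symmetric matrix field $E^i$ satisfying $\|E^i\|_k \le CK\mu^k$ for $k \le N-i$. So apply the base-case construction to $\gamma^i$ and $\widetilde M := (\mu/\lambda)^i E^i$: this produces $\widetilde w, \widetilde E, \widetilde F, \widetilde\gamma$ with $\gamma^i(\lambda x\cdot\nu)\widetilde M = 2\sym D\widetilde w + \widetilde\gamma(\lambda x\cdot\nu)(\mu/\lambda)\widetilde E + \widetilde F$. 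Multiplying the new $\widetilde E$-normalization by the ambient factor $(\mu/\lambda)^i$ turns it into $(\mu/\lambda)^{i+1}$, giving $E^{i+1} := \widetilde E$ (with $\|\widetilde E\|_k \le CK\mu^k$ on $k \le N-i-1$, one derivative lost as expected), $w^{i+1} := w^i + \widetilde w$, $F^{i+1} := F^i + \widetilde F$ (still in the span, since both summands are), $\gamma^{i+1} := \widetilde\gamma$. The estimates \eqref{e:westimate}, \eqref{e:Festimate} follow because $\widetilde w$ carries a $1/\lambda$ from the antiderivative and $\|D\alpha(\widetilde M)\|_k \le C\|\widetilde M\|_{k+1} \le C(\mu/\lambda)^i \cdot K\mu^{k+1} \le CK\lambda^k$ (using $\mu \le \lambda$ generously), and similarly $\|\widetilde w\|_k \le CK\lambda^{k-1}$, $\|\widetilde F\|_k \le CK\lambda^k$; summing a bounded number $i \le N$ of such contributions only changes the constant.

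\textbf{Bookkeeping of constants and the main obstacle.} The constants must be tracked so they depend only on $N, n, \nu, \gamma$ and not on $\mu, \lambda, K$: linearity of $\Psi = \Phi^{-1}$ gives $K$-linear, $(\mu,\lambda)$-independent control of $\alpha(M), \beta(M)$; each antiderivative $\gamma^{(1)}, \gamma^{i(1)}$ introduces constants depending only on $\gamma$ and the iteration depth (hence on $N$ and $\gamma$); and the chain rule composition with the affine map $x\mapsto\lambda x\cdot\nu$ is handled by the standard Leibniz/composition estimates \eqref{e:Leibniz}, Lemma \ref{l:composition}, each factor of which costs at most a $\nu$-dependent constant. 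I expect the main obstacle to be purely notational rather than conceptual: keeping the three-way split (a genuine gradient, a small-but-derivative-losing error with the correct power of $\mu/\lambda$, and a large low-dimensional remainder in the fixed span) clean through the induction, and in particular verifying at each stage that the ``remainder'' one integrates by parts really is again of the form (zero-mean periodic)$\times$(matrix with $\mu^k$ bounds) so that the induction closes — this hinges on the fact that $\sym(D\alpha(\cdot))$ introduces only the lower-frequency derivative and no new $\lambda$-dependence, which is exactly why the gain $(\mu/\lambda)^i$ accumulates.
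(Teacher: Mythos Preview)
Your approach is correct and essentially identical to the paper's: decompose $M$ via Lemma~\ref{l:ibpdecomposition}, take a zero-mean antiderivative to build $w^1$, peel off the span-part as $F^1$, and iterate on the leftover $\gamma^i(\lambda x\cdot\nu)(\mu/\lambda)^iE^i$. The only slips are cosmetic: your $w^1=\frac{\gamma^{(1)}}{\lambda}\alpha(M)$ is off by a factor of $2$ (the paper takes $w^1=\frac{\gamma^{(1)}}{2\lambda}\alpha(M)$ so that $2\,\sym Dw^1$ produces exactly $\gamma\,\alpha(M)\odot\nu$, not twice it), and your hesitation about $E^1$ is resolved in the paper by defining $E^1:=-\tfrac{1}{\mu}\sym(D\alpha(M))$ from the outset, so that $\gamma^1(\mu/\lambda)E^1=-\tfrac{\gamma^1}{\lambda}\sym(D\alpha(M))$ and $\|E^1\|_k\le CK\mu^k$ falls out directly; your ``wait'' about $F^1$ was unnecessary, since $\gamma(\lambda x\cdot\nu)\beta_{j-n}(M)$ is already a scalar coefficient and the expression was in the span all along.
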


\begin{proof}
    We prove the proposition by induction. For $i=1$ we use Lemma \ref{l:ibpdecomposition} to decompose 
    \[ M = \alpha(M)\odot \nu +\sum_{j=n+1}^{n_*} \beta_{j-n}(M)\nu_j\otimes \nu_j\,. \]
    Since $\Psi $ is linear we have  for all $k=0,\ldots, N$
    \[ \|\alpha(M)\|_k+\|\beta(M)\|_k\leq C \|M\|_k \]
    for a constant $C$ only depending on $n,N$ and $\nu$. Set 
    \[ \gamma^1(t) = \int_0^t \gamma(s)\,ds - \fint_0^{2\pi} \left( \int_0^t \gamma(s)\,ds\right )\,dt\,.\]
 By the assumption on $\gamma$, we immediately have $\gamma^1\in C^\infty(\S^1)$ and
 \[\fint \gamma^1 =0,\quad \frac{d}{dt} \gamma^1 = \gamma.\]
Define
\[w^1(x) = \frac{\gamma^1(\lambda x\cdot \nu)}{2\lambda} \alpha(M(x))\,.\]
Then $w^1\in C^N$ with  
\[ \|w^1\|_k \leq C K \lambda^{k-1} \text{ for } k=0,\ldots, N,\]
where $C$ only depends on $n,N$ and $\gamma$,
thanks to the Leibniz rule \eqref{e:Leibniz}, $\mu\leq \lambda$ and the assumptions on  $M$. Then \eqref{e:westimate} holds for $i=1$.
Observe that 
\[ Dw^1 = \frac12\gamma \alpha(M)\otimes \nu + \frac{\gamma^1}{2\lambda} D(\alpha(M))\,,\]
where we suppressed the argument $\lambda x\cdot \nu$ of $\gamma $ and $\gamma^1$. Consequently,
\[ 2\,\sym(D w^1) = \gamma \alpha(M)\odot \nu + \frac{\gamma^1}{\lambda} \sym\left(D(\alpha(M))\right)\,.\]
This yields 
\[ \gamma M = 2\,\sym ( Dw^1)-  \frac{\gamma^1}{\lambda} \sym\left( D(\alpha(M))\right) +\gamma \sum_{j=n+1}^{n_*} \beta_{j-n}(M)\nu_j\otimes \nu_j\,,\]
giving \eqref{e:ibpdecomposition} for $i=1$ after defining 
\[ E^1 = -\frac{1}{\mu} \sym\left(D(\alpha(M))\right)\,,\quad F^1 = \gamma \sum_{j=n+1}^{n_*} \beta_{j-n}(M)\nu_j\otimes \nu_j\,.\]
Clearly, \eqref{e:spanproperty} holds. The remaining estimates \eqref{e:Eestimate} and \eqref{e:Festimate} then follow from the assumption \eqref{assM}, $\lambda\geq \mu$ and the Leibniz rule \eqref{e:Leibniz}.

Now assume the claim is true up to some $1\leq i\leq N-1$, so that we can write
\[ \gamma(\lambda x\cdot \nu ) M = 2\,\sym(D w^i) +\gamma^{i}(\lambda x\cdot \nu)\left(\frac{\mu}{\lambda}\right)^i E^i + F^i\,\]
for some $w^i,E^i,F^i,\gamma^i$ satisfying \eqref{e:spanproperty}--\eqref{e:Festimate}.
Define 
\[\tilde \gamma = \gamma^i,\quad \tilde N = N-i,\quad \tilde K = C\left ( \frac{\mu}{\lambda}\right)^i K,\quad
 \tilde M =  \left ( \frac{\mu}{\lambda}\right)^i E^i\,,\]
where $C$ is the constant in \eqref{e:Eestimate}. By the induction assumption there exist $\tilde w^1, \tilde F^1\in C^{\tilde N}$, $\tilde E^1\in C^{\tilde N-1}$ and $\tilde \gamma^1 \in C^\infty(\S^1) $ with zero mean such that 
\[\tilde \gamma(\lambda x\cdot \nu) \tilde M = 2\,\sym( D\tilde w^1 ) + \tilde \gamma ^1(\lambda x\cdot \nu) \frac{\mu}{\lambda} \tilde E^1 + \tilde F^1\,\]
and \eqref{e:spanproperty}--\eqref{e:Festimate} are satisfied with $\tilde K$, $\tilde N$ and $i$ replaced by  $K$, $N$, and $1$ respectively. We then define 
\[
    w^{i+1} = w^i + \tilde w^1,\quad F^{i+1} = F^i + \tilde F^1,\quad  E^{i+1} = \left(\frac{\lambda}{\mu}\right)^i \tilde E^1,\quad    \gamma^{i+1}= \tilde \gamma^1\,,
\]
so that \eqref{e:ibpdecomposition} and \eqref{e:spanproperty} hold for $i+1$. For the estimates \eqref{e:westimate} we get from $\tilde K\leq C K$ and the induction assumption that
\[ \|w^{i+1}\|_k \leq CK\lambda^{k-1} + C\tilde K \lambda^{k-1}\leq CK\lambda^{k-1} \text{ for } k=0,\ldots, \tilde N = N-(i+1)+1\,.\]
The estimate \eqref{e:Festimate} follows analogously. Finally, 
\[\|E^{i+1}\|_k \leq \left(\frac{\lambda}{\mu}\right)^i \|\tilde E^1\|_k \leq C \left(\frac{\lambda}{\mu}\right)^i \tilde K \mu^k = CK \mu^k  \]
for $k=0,\ldots, \tilde N-1 = N-(i+1)$, finishing the proof. 
\end{proof}

\section{Iteration Propositions}\label{s:iteration}

In this section we prove the main iteration proposition, which describes how to construct the short map $u_{q+1} $ from the given $u_q$ -- the \emph{stage} Proposition \ref{p:stage}. Given the decomposition of the metric deficit $g-Du_q^tDu_q$ via Lemma \ref{l:babykaellen}, we perturb the map $u_q$ through $n_*$ steps. Our new ansatz for each such perturbation is given in Lemma \ref{l:step} (see \eqref{d:perturbation}). We then show in Proposition \ref{p:ibpstep} that  this ansatz, when combined with the iterative integration by parts process (Proposition \ref{p:ibp}), yields favorable estimates on the metric error. Meanwhile, Proposition \ref{p:usualstep} describes the effect of the ansatz without integration by parts. Finally, we prove Proposition \ref{p:stage} by applying Proposition \ref{p:ibpstep} $n$ times and Proposition \ref{p:usualstep}  $n_*-n$ times.

\subsection{Adding one primitive metric--A Step}
We now introduce the general form of our new ansatz for the perturbation and investigate its effect under natural assumptions on the involved quantities. The perturbation is based upon the periodic corrugation functions 
$\gamma_1,\gamma_2 \in C^\infty(\S^1)$ defined by 
\begin{equation}\label{d:corrugations} 
\gamma_1(t) = -\frac{1}{4}\sin(2t) \,, \gamma_2(t) = \sqrt{2} \sin(t)\,,
\end{equation}
which is similar to the ones in  \cite{LewickaPakzad}. Observe that both $\gamma_1$ and $\gamma_2$ have mean zero and satisfy the ``inclusion"
\begin{equation}\label{e:inclusion}
    2\gamma_1' +(\gamma_2')^2 = 1\,.
\end{equation}

\begin{lemma}[Perturbation]\label{l:step} 
Fix $N\in \N$, $0\leq j\leq N$. Let $\nu \in\S^{n-1}$, $u\in C^{N+2}(\bar \Omega,\R^m)$, $a\in C^{N+1}(\bar \Omega)$ and $w\in C^{N-j+1}(\bar \Omega, \R^n)$ be given such that 
\begin{align}
    &\frac{1}{\rho}\mathrm{Id}\leq Du^t Du \leq \rho \mathrm{Id}\,, \label{a:uinvertibility}\\
    &\|u\|_{k+1}\leq \delta^{\sfrac{1}{2}} \mu^{k} \,, \text{ for }  k=1,\ldots,N,\label{a:uestimates}\\
    &\|a\|_0\leq \rho, \label{a:a0estimate}\\ 
    &\|a\|_k\leq \mu^k \text{ for } k=1,\ldots,N+1\,,\label{a:aestimates}\\
    & \|w\|_k \leq C_0\lambda^{k-1} \text{ for } k=0,\ldots, N-j+1 \label{a:westimates}
\end{align} 
for some constants $\rho>1, \,0<\delta\leq 1$,\, $\lambda\geq\mu\geq \delta^{-\sfrac12}$ and constant $C_0\geq 1$. Define the perturbation 
\begin{equation}\label{d:perturbation}
        v(x)= u(x) +\delta T(x) \left( \frac{a^2(x)\gamma_1(\lambda x\cdot \nu )}{\lambda}\nu + w(x) \right) + \frac{\delta^{\sfrac{1}{2}}a(x) \gamma_2(\lambda x\cdot \nu )}{\lambda }\zeta(x)\,,
    \end{equation}
where 
\begin{align}
    &T= Du (Du^tDu)^{-1}\,,\label{d:T}\\
    &Du^t \zeta = 0\,,\quad |\zeta| =1 \,. \label{a:normal}
\end{align}
Then $v\in C^{N-j+1}(\bar \Omega,\R^n)$ satisfies
\begin{equation}\label{e:stepestimates}
\|v-u\|_k \leq C\delta^{\sfrac{1}{2}}\lambda^{k-1} \text{ for } k=0,\ldots, N-j+1\,,
\end{equation}
and 
    \begin{align}\label{e:steperror}
     \begin{split}
         D v^t Dv =& Du^t Du + \delta a^2 \nu \otimes \nu +R+
         \delta \frac{2\pi}{\lambda^2}\nabla a\otimes \nabla a \\
     &+2\delta \,\sym\left(  Dw\right)+\delta\sum_{i=1}^4\gamma_i(\lambda x\cdot \nu)M_i\,,
     \end{split}
\end{align}
where
\[\gamma_3:=\gamma_2\gamma_2',\quad \gamma_4:=\gamma_2^2-2\pi,\] 
and $M_i\in C^N(\bar \Omega, \Sym), i=1,\ldots, 4$ are independent of $w$,  $R\in C^{N-j}(\bar \Omega, \Sym)$, and the estimates 
\begin{align}
    &\|M_i\|_k\leq \bar C \frac{\mu}{\lambda}\mu^k\, \text{ for } k=0,\ldots, N\,,\label{e:steperrorestimatesM}\\
    &\|R\|_k\leq C\delta^{\sfrac{3}{2}}\lambda^k, \text{ for } k=0,\ldots, N-j\,,\label{e:steperrorestimatesR}
\end{align}
are satisfied.
The constants  $C\geq 1$ in \eqref{e:stepestimates} and \eqref{e:steperrorestimatesR} depends on $n, N$, $\rho$ and $C_0$, while the constant $\bar C$ in \eqref{e:steperrorestimatesM} only  depends  on $n, N$, $\rho$ but not on $C_0.$
\end{lemma}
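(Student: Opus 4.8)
The plan is to expand $Dv^tDv$ directly from the definition \eqref{d:perturbation}, group the resulting terms by their frequency and order in $\delta$, identify the ``main'' contributions that produce the announced terms $\delta a^2\nu\otimes\nu$, $\delta\frac{2\pi}{\lambda^2}\nabla a\otimes\nabla a$ and $2\delta\,\sym(Dw)$, and collect everything else into the oscillatory terms $\delta\sum_i \gamma_i(\lambda x\cdot\nu)M_i$ and the remainder $R$. First I would record the basic facts about the frame: from \eqref{a:uinvertibility} the matrix $Du^tDu$ is invertible with bounds depending only on $\rho$, hence $T=Du(Du^tDu)^{-1}$ satisfies $Du^tT=\id$, $T^tT=(Du^tDu)^{-1}$, and $\zeta^tT=0$ (since $Du^t\zeta=0$). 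Using \eqref{a:uestimates} and the Leibniz/composition rules \eqref{e:Leibniz} (together with the smooth dependence of the inverse and of the unit normal on $Du$) one gets $\|T\|_k, \|\zeta\|_k\le C\delta^{?}\mu^k$ type bounds for $k\le N$ with constants depending on $n,N,\rho$; I would isolate these estimates as a preliminary step since they feed every later calculation. Write $v=u+P+Q$ with $P=\delta T\big(\tfrac{a^2\gamma_1}{\lambda}\nu+w\big)$ and $Q=\tfrac{\delta^{1/2}a\gamma_2}{\lambda}\zeta$; then $Dv=Du+DP+DQ$ and $Dv^tDv=Du^tDu+2\,\sym(Du^tDP)+2\,\sym(Du^tDQ)+\text{(quadratic terms in }DP,DQ)$.

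The heart of the computation is to see which derivatives hit the fast phase $\lambda x\cdot\nu$ and thus produce the leading terms. Differentiating $Q$, the dominant piece is $\delta^{1/2}a\gamma_2'(\lambda x\cdot\nu)\,\zeta\otimes\nu$ (derivative landing on $\gamma_2$), so that $|DQ|^2$ contributes $\delta a^2(\gamma_2')^2\,\nu\otimes\nu$ at leading order, plus the genuine lower-order gradient term $\delta\frac{2\pi}{\lambda^2}\nabla a\otimes\nabla a$ once we add and subtract the mean $2\pi$ of $\gamma_2^2$ (this is where $\gamma_4=\gamma_2^2-2\pi$ and the normalisation $\sqrt2$ in \eqref{d:corrugations} enter — note $\fint\gamma_2^2=2\pi$), plus cross terms with $\nabla a$ and $D\zeta$ that carry extra $\lambda^{-1}$ and go into $R$. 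Differentiating $P$, the dominant piece is $\delta\, T\,a^2\gamma_1'(\lambda x\cdot\nu)\,\nu\otimes\nu + \delta\,T\,Dw$ (derivative on $\gamma_1$, resp.\ on $w$), so $2\,\sym(Du^tDP)$ contributes $2\delta a^2\gamma_1'\,\nu\otimes\nu+2\delta\,\sym(Dw)$ because $Du^tT=\id$; adding the $(\gamma_2')^2$ term and invoking the inclusion identity \eqref{e:inclusion}, $2\gamma_1'+(\gamma_2')^2=1$, collapses the two $\nu\otimes\nu$ contributions to exactly $\delta a^2\,\nu\otimes\nu$. The cross term $2\,\sym(Du^tDQ)$ produces $\delta^{1/2}a\gamma_2'\,\sym((Du^t\zeta)\otimes\nu)=0$ by \eqref{a:normal} at leading order, leaving only a term with the derivative on $\zeta$, which is $O(\delta^{1/2}/\lambda)$ and goes into — after multiplying by the outer $\delta^{1/2}$ — the $\delta$-order remainder $R$ or the $\gamma_i M_i$ part. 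The remaining genuinely oscillatory, non-mean-zero-free contributions are organised as $\delta\gamma_1 M_1+\delta\gamma_2 M_2+\delta\gamma_3 M_3+\delta\gamma_4 M_4$ with $\gamma_3=\gamma_2\gamma_2'$, $\gamma_4=\gamma_2^2-2\pi$: $M_1$ collects the $\gamma_1$-proportional cross terms between $DP$ and $Du$/$DQ$, $M_3$ the $a\,\gamma_2\gamma_2'$ cross term between the two pieces of $DQ$ (derivative on $\gamma_2$ times derivative on $a$), $M_4$ the mean-free part of the $|DQ|^2$ bookkeeping, and $M_2$ the $\gamma_2$-proportional terms coming from cross products of $DQ$ with $DP$ and with $D\zeta$; crucially each $M_i$ is built only from $u,a,\zeta,T$ and their derivatives — not from $w$ — so they satisfy \eqref{e:steperrorestimatesM} with a constant independent of $C_0$. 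Each $M_i$ carries one explicit factor $\lambda^{-1}$ (from the $\frac1\lambda$ in front of the corrugations in \eqref{d:perturbation} not being cancelled by a fast derivative) times a quantity bounded by $\mu^k$ in $C^k$ via \eqref{a:uestimates}, \eqref{a:aestimates} and the frame bounds, giving $\|M_i\|_k\le \bar C\frac{\mu}{\lambda}\mu^k$.

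For $R$ I would simply define it as everything left over: the fully quadratic-in-$P$ term $DP^tDP$, which is $O(\delta^2\lambda^0)$ hence $\le C\delta^{3/2}\lambda^k$ in $C^k$ for $k\le N-j$ (here the loss of $j$ derivatives comes from $w\in C^{N-j+1}$ only, via \eqref{a:westimates}); the cross terms $2\,\sym(DP^tDQ)$ which are $O(\delta^{3/2}\lambda^{-1})$; and all the slow-derivative pieces of the above that were not promoted to $\gamma_i M_i$, each of which carries an extra $\delta$ or $\delta^{1/2}/\lambda$ relative to the leading terms. Collecting, $\|R\|_k\le C\delta^{3/2}\lambda^k$ for $k\le N-j$, with $C$ depending on $n,N,\rho,C_0$. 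The $C^0$ and $C^k$ bounds \eqref{e:stepestimates} on $v-u$ are immediate from $\|P\|_k\le C\delta\lambda^{k-1}$ (using \eqref{a:westimates} and the frame bounds, $\delta\le1$ so this is $\le C\delta^{1/2}\lambda^{k-1}$) and $\|Q\|_k\le C\delta^{1/2}\lambda^{k-1}$, again via the Leibniz rule \eqref{e:Leibniz} and $\mu\le\lambda$; the regularity statement $v\in C^{N-j+1}$ is dictated by the weakest term, namely $w\in C^{N-j+1}$. I expect the main obstacle to be purely organisational rather than conceptual: carefully separating, among the dozen or so terms generated by expanding $Dv^tDv$, those that must be kept as leading order, those that become $\gamma_i M_i$ (and checking the $w$-independence and the $C_0$-independence of $\bar C$ there), and those that are small enough to be absorbed into $R$ — and making sure the derivative counting is consistent with the $C^{N-j+1}$ versus $C^{N}$ versus $C^{N-j}$ regularity classes appearing in the statement. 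The one genuine piece of ``magic'' is the identity \eqref{e:inclusion}, without which the $\nu\otimes\nu$ terms would not combine to the clean $\delta a^2\nu\otimes\nu$; everything else is bookkeeping with the Leibniz and composition estimates.
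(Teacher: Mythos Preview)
Your proposal is correct and follows essentially the same approach as the paper: expand $Dv^tDv$ directly, use $Du^tT=\id$ and $Du^t\zeta=0$ to isolate the main contributions, invoke the inclusion identity \eqref{e:inclusion} to assemble $\delta a^2\nu\otimes\nu$, and sort the remaining pieces into the $w$-independent oscillatory terms $\delta\gamma_iM_i$ and the $O(\delta^{3/2})$ residual $R$. The only spot to tighten is the frame estimate you flagged with ``$\delta^{?}$'': the paper establishes $\|T\|_k,\|\zeta\|_k\le C(1+\delta^{1/2}\mu^k)$, and that extra $\delta^{1/2}$ on $\|D\zeta\|_0$ is exactly what makes the $Du^tDQ$ leftover (which is $O(\delta\mu/\lambda)$, not $O(\delta^{1/2}/\lambda)$ as you wrote) fit as $\delta\gamma_2 M_2$ with the claimed $\bar C\frac{\mu}{\lambda}$ bound on $M_2$.
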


\begin{remark}
Our ansatz in \eqref{d:perturbation} differs from the usual codimension-one ansatz (see \cite{CDS}, \cite{DIS}, etc.) in three ways: a different choice of oscillatory functions, a smaller tangential component of the perturbation (scaling as $\delta$ instead of $\delta^{\sfrac{1}{2}}$), and, most importantly, the introduction of a corrective vector field $w$ in the tangential perturbation.  

A consequence of our choice of oscillatory functions \eqref{d:corrugations} is that they satisfy the inclusion \eqref{e:inclusion} instead of the more complicated (1.26) in \cite{CDS}. This results in an additional error in the induced metric $Dv^t Dv$ of order $0$ in $\lambda$, which initially appears too large. However, due to the smaller size of the tangential perturbation, this error is in fact of order $\delta^{\sfrac{3}{2}}$, making it sufficiently small for our purposes.  
\end{remark}
\begin{proof}
Observe that both $T$ in \eqref{d:T} and $\zeta$ in \eqref{a:normal} are well-defined thanks to \eqref{a:uinvertibility}. Using Lemma \ref{l:composition}, the Leibniz rule \eqref{e:Leibniz} and \eqref{a:uinvertibility}--\eqref{a:uestimates} we conclude  
\begin{align}
\|T\|_k\leq& C\|u\|_{k+1}\leq C(1+\delta^{\sfrac{1}{2}}\mu^k),\label{e:Testimate}\\
\|\zeta\|_k\leq& C\|u\|_{k+1}\leq C(1+\delta^{\sfrac{1}{2}}\mu^k),\label{e:zetaestimates}
\end{align} 
for $k=0,\ldots, N$ and some constant $C$ which depends only on $n, N$ and $\rho$ (compare e.g. \cite[Lemma 3.5]{CaoIn2024}). We now show the estimates \eqref{e:stepestimates}. Using \eqref{e:Testimate}, \eqref{a:a0estimate} and \eqref{a:aestimates} we find
 \begin{align*}
     \|\delta T \frac{a^2 \gamma_1(\lambda x\cdot \nu )}{\lambda}\nu\|_k \leq& C\delta\left( \lambda^{k-1}\|a^2 T\nu\|_0 + \lambda^{-1} \|a^2 T\nu\|_k\right)
     \leq C\delta \lambda^{k-1}.
 \end{align*}
 for $k=0,\ldots, N$ and some constant $C$ depending only on $n, N$ and $\rho$. By the Leibniz rule, using \eqref{e:zetaestimates}, \eqref{a:a0estimate} and \eqref{a:aestimates} we can estimate 
 \begin{align*} 
 \|\frac{\delta^{\sfrac{1}{2}}a \gamma_2(\lambda x\cdot \nu )}{\lambda }\zeta\|_k &\leq C \left(\delta^{\sfrac{1}{2}} \lambda^{k-1}\|a\zeta\|_0 + \delta^{\sfrac{1}{2}} \lambda^{-1}\|a\zeta\|_k\right) \leq  C\delta^{\sfrac{1}{2}}\lambda^{k-1}
 \end{align*}
 for $k=0,\ldots,N$. Lastly, by \eqref{e:Testimate} and \eqref{a:westimates},
 \[ \|\delta T w\|_k \leq C\delta\left( \|w\|_k + \|T\|_k\|w\|_0\right) \leq C\delta\lambda^{k-1} \]
 for $k=0,\ldots, N-j+1$, where $C$ depends on $C_0, n, N$ and $\rho$. Combining these three estimates yields \eqref{e:stepestimates} in view of $0<\delta<1$.

 To show \eqref{e:steperror} we compute (suppressing the argument $\lambda x\cdot \nu$ in $\gamma_1,\gamma_2$)
  \begin{align*}
      Dv &= Du + \delta a^2 \gamma_1' T\nu \otimes \nu + \delta^{\sfrac{1}{2}}a\gamma_2'\zeta\otimes \nu \\ 
       & \quad\quad \,\,\,\,\,  +\delta T Dw + \delta (DT) w\\ 
       & \quad\quad \,\,\,\,\, + \delta \frac{\gamma_1}{\lambda }D(a^2 T\nu) + \delta^{\sfrac{1}{2}} \frac{\gamma_2}{\lambda} D(a\zeta)\\
       & =: Du + A_1 +A_2 \\ 
       & \quad\quad \,\,\,\,\,\,+B_1 + B_2 \\
       & \quad\quad \,\,\,\,\,\, +E_1+ E_2\,.
  \end{align*}
  Now write 
  \begin{equation}\label{e:induced} 
  Dv^t Dv = Du^t Du + 2\,\sym(Du^t (A+B+E)) + (A+B+E)^t(A+B+E)\,,\end{equation}
  where we abbreviated $A:= A_1+A_2$, $B:=B_1+B_2$ and $E:= E_1+E_2$. 
  
  For the second term on the right hand side of \eqref{e:induced}, note that 
  \[ 2 \,\sym \left (Du^t(A+B_1)\right) = 2\delta a^2 \gamma_1'\nu\otimes \nu + 2\delta\, \sym\left( Dw\right)\, \]
  by $Du^t\zeta=0$ and $Du^tT=\mathrm{Id}$. The next term in $Du^t (A+B+E)$ we denote by \[R_1 := 2\sym(Du^t B_2 ),\]
  which satisfies
  \begin{align}\label{e:R_1}
       \|R_1\|_k&\leq  C\delta(\|T\|_{k+1}\|w\|_0+\|T\|_1\|w\|_k+\|T\|_1\|w\|_0\|u\|_{k+1}) \nonumber\\
       &\leq C\delta^{\sfrac32} \lambda^{k}, \text{ for } k=0,\ldots, N-j+1,
  \end{align}
  where we used \eqref{a:uinvertibility}, \eqref{e:Testimate} and \eqref{a:westimates} and the constant $C$ depends on $\rho, N$ and $C_0$. 
   Proceeding with the next terms, we define $M_1$ and $M_2$ through 
  \[\delta \gamma_i M_i := 2\,\sym(Du^t E_i)\]
  for $i=1,2$, i.e.,
  \begin{align}
      M_1 &= \frac{2}{\lambda}\sym\left(Du^t D(a^2 T\nu)\right)\,,\label{d:M_1}\\
      M_2 &= \frac{2}{\delta^{\sfrac{1}{2}}\lambda} \,\sym\left( Du^t D(a\zeta)\right)= \frac{2a}{\delta^{\sfrac{1}{2}}\lambda} \,\sym\left( Du^t D\zeta\right)\,,\label{d:M_2}
  \end{align}
  so that we can write
  \begin{equation}\label{e:dudv}
      2\,\sym (Du^t (A+B+E))= 2\delta a^2 \gamma_1'\nu\otimes \nu + 2\delta\,\sym\left( Dw\right) +\delta(\gamma_1 M_1+\gamma_2 M_2)+R_1\,.
  \end{equation}

Now consider the last term in \eqref{e:induced} and observe that the only terms in this product which are not of order $O(\delta^{\sfrac{3}{2}}) $ are 
  \[ A_2^t A_2, \quad 2\,\sym ( A_2^t E_2 ), \quad E_2^t E_2 \,.\]
Computing the first term gives 
\[ A_2^t A_2 = \delta a^2 (\gamma_2')^2\nu\otimes \nu\,.\]
For the second one, we can define $M_3$ and $\gamma_3$ through
\[\delta \gamma_3 M_3 = 2\, \sym(A_2^t E_2)\,,\]
that is,
\begin{equation}\label{d:M_3}
      M_3= 2\frac{a}{\lambda}\sym\left(\nu\otimes \zeta D(a\zeta)\right)\,,~~ \gamma_3 = (\gamma_2')^2.
  \end{equation}
For the last term we observe that due to $|\zeta|=1$ we have 
\[ E_2^tE_2 = \delta \frac{\gamma_2^2}{\lambda^2}\nabla a\otimes \nabla a + \delta \frac{\gamma_2^2}{\lambda^2}D\zeta^t D\zeta\,.\]
Notice that the first quantity is not of the form $\gamma M$ for any $\gamma \in C^\infty(\S^1)$ with mean zero, indeed the mean of $\gamma_2^2$ is $2\pi$. We set
  \begin{equation}\label{d:M_4}
      M_4 =  \frac{1}{\lambda^2}\nabla a\otimes \nabla a\,,
  \end{equation} 
so that
\[\delta \frac{\gamma_2^2}{\lambda^2}\nabla a\otimes \nabla a=\delta\gamma_4M_4+\delta\frac{2\pi}{\lambda^2}\nabla a\otimes \nabla a.\]
We therefore obtain 
  \[ E_2^tE_2 = \delta \gamma_4 M_4 + \delta\frac{2\pi}{\lambda^2}\nabla a\otimes \nabla a +\delta\frac{\gamma_2^2}{\lambda^2}D\zeta^tD\zeta\,.\]
  Besides, by \eqref{a:uestimates}, the last term of the above formula can be controlled by
\begin{equation}\label{e:dzeta2}
   \|\delta\frac{\gamma_2^2}{\lambda^2}D\zeta^t D\zeta\|_k \leq C\delta^2 \frac{\mu^2}{\lambda^2}\lambda^k < C\delta^{\sfrac{3}{2}}\lambda^k\,\text{ for } k=0,\ldots, N,
\end{equation}
so that it is small enough to be put into $R.$ 
Finally, we can set
 \begin{align}
      R_2 :=& (A+B+E)^t(A+B+E) -A_2^tA_2 -2\, \sym(A_2^t E_2) \nonumber\\
      &~~~- \left(E_2^tE_2 - \delta \frac{\gamma_2^2}{\lambda^2}D\zeta^t D\zeta\right)\nonumber\\
      =& (A+B+E)^t(A+B+E) -\delta a^2(\gamma_2')^2\nu\otimes\nu \nonumber\\
      &~~~-\delta (\gamma_3 M_3+\gamma_4 M_4) -\delta\frac{2\pi}{\lambda^2}\nabla a\otimes \nabla a\,.\label{d:R2}
 \end{align}
From \eqref{e:induced}, \eqref{e:dudv}, and \eqref{d:R2}, we have
  \begin{align*}
 Dv^t Dv^t =& Du^t Du^t + 2\delta a^2 \gamma_1'\nu\otimes \nu + 2\delta\, \sym\left( Dw\right) +\delta(\gamma_1 M_1+\gamma_2 M_2) +R_1\\
 & + \delta a^2 (\gamma_2')^2\nu\otimes \nu +\delta (\gamma_3 M_3+\gamma_4 M_4)+ \delta\frac{2\pi}{\lambda^2}\nabla a\otimes \nabla a+R_2\,.
  \end{align*}
Upon setting $R:=R_1+R_2,$ from \eqref{e:inclusion} we conclude \eqref{e:steperror} with $M_i, \, i=1,\ldots, 4$ defined in \eqref{d:M_1}, \eqref{d:M_2}, \eqref{d:M_3} and \eqref{d:M_4} respectively.

It remains to check that the estimates \eqref{e:steperrorestimatesM}--\eqref{e:steperrorestimatesR} hold. From the assumptions \eqref{a:uestimates}, \eqref{a:a0estimate},  \eqref{a:aestimates}, \eqref{e:Testimate} and the Leibniz rule \eqref{e:Leibniz} it follows directly 
\begin{align*}
    \|M_1\|_k \leq& \frac{C}{\lambda} \big(\|a\|_{k+1}+\|T\|_{k+1}+(\|u\|_{k+1}+\|T\|_k)\|a\|_1\\
    &~~~~~+(\|u\|_{k+1}+\|a\|_k)\|T\|_1\big)\\
    \leq&C\frac{\mu}{\lambda}\mu^k,\quad k=0,\ldots, N.
\end{align*}
Similarly,
\begin{align*}
&\|M_2\|_k + \|M_3\|_k \leq C\frac{\mu}{\lambda}\mu^k,\,\\
& \|M_4\|_k \leq C\frac{\mu}{\lambda^2}\mu^{k+1} \leq C\frac{\mu}{\lambda}\mu^k\,,
\end{align*} 
for a constant $C$ only depending on $n$, $N$ and $\rho$,  which shows \eqref{e:steperrorestimatesM}.
For \eqref{e:steperrorestimatesR} we compute  
\begin{align*}
     &\|A_1\|_k \leq C \delta \lambda^k\,,\quad \|A_2\|_k \leq C\delta^{\sfrac{1}{2}}\lambda^k, \\ 
     & \|E_1\|_k \leq C\delta\frac{\mu}{\lambda}\lambda^k\,,\quad 
     \|E_2\|_k\leq C\delta^{\sfrac{1}{2}}\frac{\mu}{\lambda}\lambda^k\,, 
\end{align*}
 for $ k=0,\ldots, N\,$ and
\begin{align*}
    \|B_1\|_k \leq C\delta \lambda^k\,,\qquad \|B_2\|_k \leq C\delta \lambda^{k-1} \text{ for } k=0,\ldots, N-j\,,
\end{align*}
using the assumptions and the Leibniz rule. Then
the Leibniz rule, $\delta<1$ and $1\leq \mu\leq \lambda$  imply
\begin{align*}
    &\|A_1^t(A+B+E)\|_k \leq C\delta^{\sfrac{3}{2}}\lambda^k \text{ for }k=0,\ldots, N-j\,,\\
  &\|A_2^t(A_1+B+E_1)\|_k \leq C\delta^{\sfrac{3}{2}}\lambda^k \text{ for } k=0,\ldots, N-j\,,\\
  &  \|B^t(A+B+E)\|_k \leq C\delta^{\sfrac{3}{2}}\lambda^k\,\text{ for } k=0,\ldots, N-j\,,\\
  &\| E_1^t(A+B+E)\|_k \leq C\delta^{\sfrac{3}{2}}\lambda^k\,,\text{ for }  k=0,\ldots, N-j\,.
\end{align*} 
Note that 
 \begin{align*}
     R =& (A+B+E)^t(A+B+E) -A_2^tA_2 -2\, \sym(A_2^t E_2) - E_2^tE_2 + \delta \frac{\gamma_2^2}{\lambda^2}D\zeta^t D\zeta\\
     =& 2\sym\big(A_1^t(A+B+E)+A_2^t(A_1+B+E_1)+B^t(A+B+E)\big)\\
     &+2\sym\big(E_1^t(A+B+E)\big)+ \delta \frac{\gamma_2^2}{\lambda^2}D\zeta^t D\zeta\,.
 \end{align*}
Thus, combining \eqref{e:R_1} with \eqref{e:dzeta2}, we get \eqref{e:steperrorestimatesR}. This finishes the proof.
\end{proof}

Setting $w=0$ in \eqref{d:perturbation} gives the the following ``standard" step proposition, which will be applied to the final $n_*-n$ perturbations. 

\begin{proposition}[Ordinary Step]\label{p:usualstep}
Let $V\subset \R^n$ be an open, bounded, simply connected set.
Fix $N\in \N$.  Let $u\in C^{N+2}(\bar V,\R^{n+1})$ be an immersion and $a\in C^{N+1}(\bar V)$ such that
\begin{align}
   & \frac{1}{\rho} \mathrm{Id}\leq Du^t Du \leq \rho \mathrm{Id}\,,\label{a:uestmate-11}\\
    &\|u\|_{k+1}\leq \delta^{\sfrac{1}{2}} \mu^k \text{ for } k=1,\ldots,N\,, \label{a:ustimates-12}\\
    &\|a\|_0\leq \rho,\label{a:a0estimate-1},\\
    &\|a\|_k\leq \mu^k \text{ for } k=1,\ldots,N+1\,,\label{a:aestimates-11}
\end{align}
for some  constants $\rho\geq1,\, \, 0<\delta\leq 1$, $\mu\geq \delta^{-\sfrac12}$.  

Then for any  $\lambda \geq \mu$
and  any $\nu \in \S^{n-1}$, there exists a map $v\in C^{N+1}(\bar V, \R^{n+1})$ and 
 $$\mathcal{E}:=Dv^tDv -( Du^tDu + \delta a ^2 \nu \otimes \nu)$$
 such that
\begin{align}
    \|v-u\|_k\leq& C\delta^{\sfrac{1}{2} } \lambda^{k-1} \text{ for } k=0,\ldots, N+1\,,\label{e:v-uk-1}\\
     \|\mathcal{E}\|_0 \leq& C \delta \left(\frac{\mu}{\lambda}+\delta^{\sfrac{1}{2}}\right)\label{e:error-1}
\end{align}
for a constant $C$ only depending on $N, n $ and  $\rho$.
\end{proposition}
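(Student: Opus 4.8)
\emph{Proof plan.} The plan is to obtain Proposition~\ref{p:usualstep} as a direct specialization of Lemma~\ref{l:step} with the choices $w\equiv 0$, $j=0$, $C_0=1$, and with the role of $\Omega$ played by $V$ (the proof of Lemma~\ref{l:step} is purely local and uses no property of the domain beyond the stated estimates; the unit normal $\zeta$ required in \eqref{a:normal} can be taken globally on $\bar V$, for instance as the normalization of the generalized cross product of the columns of $Du$, which is a $C^{N+1}$ nowhere-vanishing function of $Du$ thanks to \eqref{a:uestmate-11}, or more abstractly by observing that simple connectedness of $V$ makes the normal line bundle orientable). First I would check that the hypotheses \eqref{a:uinvertibility}--\eqref{a:westimates} of Lemma~\ref{l:step} hold: assumptions \eqref{a:uinvertibility}--\eqref{a:aestimates} are exactly \eqref{a:uestmate-11}--\eqref{a:aestimates-11}, while \eqref{a:westimates} holds trivially with $C_0=1$ since $w\equiv 0$.

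Next, define $v$ by the ansatz \eqref{d:perturbation} with $w\equiv 0$, i.e.
\[ v(x)= u(x) +\delta T(x)\, \frac{a^2(x)\gamma_1(\lambda x\cdot \nu )}{\lambda}\nu + \frac{\delta^{\sfrac{1}{2}}a(x)\gamma_2(\lambda x\cdot \nu )}{\lambda}\zeta(x)\,, \]
with $T= Du(Du^tDu)^{-1}$. Lemma~\ref{l:step} then immediately yields $v\in C^{N-j+1}(\bar V,\R^{n+1}) = C^{N+1}(\bar V,\R^{n+1})$ together with $\|v-u\|_k\leq C\delta^{\sfrac12}\lambda^{k-1}$ for $k=0,\ldots,N+1$, which is \eqref{e:v-uk-1} (with $C$ depending only on $n,N,\rho$ because $C_0=1$). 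Moreover, since $Dw=0$, the metric identity \eqref{e:steperror} becomes
\[ Dv^t Dv = Du^t Du + \delta a^2\,\nu\otimes\nu + R + \delta\frac{2\pi}{\lambda^2}\nabla a\otimes\nabla a + \delta\sum_{i=1}^4 \gamma_i(\lambda x\cdot\nu) M_i\,, \]
so that $\mathcal{E} = R + \delta\frac{2\pi}{\lambda^2}\nabla a\otimes\nabla a + \delta\sum_{i=1}^4 \gamma_i(\lambda x\cdot\nu) M_i$.

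It then remains only to bound $\|\mathcal{E}\|_0$ by collecting the already-proven estimates. By \eqref{e:steperrorestimatesR} with $k=0$ (and $j=0$) one has $\|R\|_0\leq C\delta^{\sfrac32}$. Since $\|\nabla a\|_0\leq\|a\|_1\leq \mu\leq\lambda$ by \eqref{a:aestimates-11}, the term $\delta\frac{2\pi}{\lambda^2}\nabla a\otimes\nabla a$ is bounded in $C^0$ by $C\delta\mu^2/\lambda^2\leq C\delta\mu/\lambda$. Finally, the functions $\gamma_1,\ldots,\gamma_4$ are bounded, so \eqref{e:steperrorestimatesM} with $k=0$ gives $\|\delta\sum_{i=1}^4\gamma_i M_i\|_0\leq C\delta\,\mu/\lambda$. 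Summing these three contributions produces $\|\mathcal{E}\|_0\leq C\delta\big(\tfrac{\mu}{\lambda}+\delta^{\sfrac12}\big)$, i.e. \eqref{e:error-1}. I expect no real obstacle here: the only points requiring care are the bookkeeping reduction to Lemma~\ref{l:step} (matching hypotheses after setting $w\equiv 0$, $C_0=1$, $j=0$, and replacing $\Omega$ by $V$) and the existence on $\bar V$ of a globally defined unit normal $\zeta$ of the required regularity; everything else is an immediate recombination of estimates already established in Lemma~\ref{l:step}.
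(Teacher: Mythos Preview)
Your proposal is correct and follows essentially the same approach as the paper: apply Lemma~\ref{l:step} with $w\equiv 0$ (hence $j=0$, $C_0=1$) to obtain the ansatz, the metric identity, and the step estimates, then bound $\mathcal{E}=R+\delta\tfrac{2\pi}{\lambda^2}\nabla a\otimes\nabla a+\delta\sum_i\gamma_iM_i$ via \eqref{e:steperrorestimatesR}, \eqref{a:aestimates-11}, and \eqref{e:steperrorestimatesM}, using $\mu\leq\lambda$ to absorb $\mu^2/\lambda^2$ into $\mu/\lambda$. The paper's own proof is a terse version of exactly this argument.
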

\begin{proof}
    The existence of $\zeta$ is guaranteed by \eqref{a:uestmate-11}, comparing again \cite[Lemma 3.5]{CaoIn2024}. Defining $v$ via \eqref{d:perturbation} with $w\equiv0$, we
    have \eqref{e:v-uk-1} and 
    \[Dv^tDv -( Du^tDu + \delta a ^2 \nu \otimes \nu)=R+\delta\frac{2\pi}{\lambda^2}\nabla a\otimes\nabla a+\delta\sum_{i=1}^4\gamma_iM_i.\]
    By \eqref{e:steperrorestimatesM},\eqref{e:steperrorestimatesR} and \eqref{a:aestimates-11}, we obtain
    \begin{align*}
       \|Dv^tDv -( Du^tDu + \delta a ^2 \nu \otimes \nu)\|_0\leq C(\delta^{\sfrac32}+\delta\frac{\mu^2}{\lambda^2}+\delta\frac{\mu}{\lambda}),
    \end{align*}
    which then implies \eqref{e:error-1} by $\mu\leq\lambda$.  
\end{proof}

On the other hand, if $\nu\cdot e_1 \neq 0$ we can choose a suitable $w$ in the ansatz \eqref{d:perturbation} with the help of Proposition \ref{p:ibp}, which improves the structure and size of the error.

\begin{proposition}[Sharper Step]\label{p:ibpstep}  
Let $V\subset \R^n$ be an open, bounded, simply connected set.
Fix $N\in \N$.  Let $u\in C^{N+2}(\bar V,\R^{n+1})$  be an immersion and $a\in C^{N+1}(\bar V)$ such that
\begin{align}
   & \frac{1}{\rho} \mathrm{Id}\leq Du^t Du \leq \rho \mathrm{Id}\,,\label{a:uestmate-21}\\
    &\|u\|_{k+1}\leq \delta^{\sfrac{1}{2}} \mu^k \text{ for } k=1,\ldots,N\,, \label{a:ustimates-22}\\
    &\|a\|_0\leq\rho, \label{a:a0estimate-2}\\
    &\|a\|_k\leq \mu^k \text{ for } k=0,\ldots,N+1\,,\label{a:aestimates-2}
\end{align}
for some  constants $\rho\geq1,\, 0<\delta\leq 1$, $\mu\geq \delta^{-\sfrac12}.$ 

Then  for any $\lambda \geq \mu$, any $\nu\in \S^{n-1} $ with $\nu \cdot e_1 \neq 0$, and any natural number $I\leq N$,  there exists a map $v\in C^{N-I+1}(\bar V, \R^{n+1})$ and error terms $\mathcal{E}\in C^{N-I}$ and $\mathcal{F}\in C^{N-I+1}(\bar V,\Sym)$ such that 
\[Dv^tDv=Du^tDu + \delta a ^2 \nu \otimes \nu + \delta \frac{2\pi}{\lambda^2}\nabla a\otimes \nabla a +\mathcal{E}+\mathcal{F}\,,\]
and 
\begin{align}
    &\|v-u\|_k\leq C\delta^{\sfrac{1}{2} } \lambda^{k-1} \text{ for } k=0,\ldots, N-I+1\,,\label{e:v-uk-2}\\
    &\|\mathcal{E}\|_0\leq
    C\delta \left( \left(\frac{\mu}{\lambda}\right)^{I+1} + \delta^{\sfrac{1}{2}}\right)\,,\label{e:error-2}\\
    &\mathcal{F}\in \mathrm{span}\{\nu_j\otimes \nu_j : j=n+1,\ldots,n_*\}\,\, \label{e:Fspan},\\
    & \|\mathcal{F}\|_k \leq C\delta \frac{\mu}{\lambda} \lambda^k \qquad \text{ for } k=0,\ldots, N-I+1\,\label{e:Festi}
\end{align}
for a constant $C$ only depending on $N, n $ and $\rho$.
\end{proposition}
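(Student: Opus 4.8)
The plan is to apply Lemma~\ref{l:step} with a cleverly chosen corrective field $w$, so that the $\gamma_i M_i$ error terms of \eqref{e:steperror} are absorbed by $2\delta\sym(Dw)$ up to something tiny and something in the low-dimensional subspace. Concretely, first I would observe that the four matrix fields $M_i$ supplied by Lemma~\ref{l:step} satisfy $\|M_i\|_k\le \bar C(\mu/\lambda)\mu^k$; setting $M := \sum_{i=1}^4$ (with the appropriate $\gamma_i$-weighting) is not quite right because $\gamma_4=\gamma_2^2-2\pi$ does \emph{not} have zero mean. So I would split off the mean: $\gamma_4 M_4 = (\gamma_2^2-\fint\gamma_2^2)M_4 + (\fint\gamma_2^2-2\pi)M_4$, but with our normalization $\fint\gamma_2^2 = 2\pi$, so actually $\gamma_4$ \emph{does} have zero mean — wait, $\gamma_2=\sqrt2\sin t$ gives $\gamma_2^2 = 2\sin^2 t = 1-\cos(2t)$, whose mean over $\S^1=[0,2\pi]$ is... $2\pi\cdot 1$? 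No: $\fint_0^{2\pi} 2\sin^2 = 1$. The constant $2\pi$ in $\gamma_4$ looks like a normalization mismatch in the paper; regardless, $\gamma_4$ as defined, $\gamma_2^2-2\pi$, must be arranged to have zero mean, and this is exactly why the term $\delta\frac{2\pi}{\lambda^2}\nabla a\otimes\nabla a$ was peeled off separately in \eqref{e:steperror} (compare the matching term in the statement of Proposition~\ref{p:ibpstep}). Granting that each $\gamma_i$ ($i=1,\dots,4$) has zero mean, I can apply Proposition~\ref{p:ibp} four times — once for each pair $(\gamma_i, M_i)$ — with the frequency $\lambda$, the base frequency $\mu$, constant $K = \bar C\,\mu/\lambda$ (from \eqref{e:steperrorestimatesM}, noting the $\mu/\lambda$ gain is already built into $K$), the direction $\nu$ (which has $\nu\cdot e_1\ne0$ by hypothesis), and iteration depth $i = I$. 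This produces $w^{(i)}, E^{(i)}, F^{(i)}$ with $\gamma_i(\lambda x\cdot\nu)M_i = 2\sym(Dw^{(i)}) + \gamma_i^{(I)}(\lambda x\cdot\nu)(\mu/\lambda)^I E^{(i)} + F^{(i)}$, each $F^{(i)}\in\mathrm{span}\{\nu_j\otimes\nu_j:j=n+1,\dots,n_*\}$, and estimates $\|w^{(i)}\|_k\le CK\lambda^{k-1}$, $\|E^{(i)}\|_k\le CK\mu^k$, $\|F^{(i)}\|_k\le CK\lambda^k$.

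Next I would set $w := \sum_{i=1}^4 w^{(i)}$ and feed it into Lemma~\ref{l:step} with $j = I$. I must first check that $w$ satisfies the hypothesis \eqref{a:westimates}: from $\|w^{(i)}\|_k\le CK\lambda^{k-1} = C\bar C(\mu/\lambda)\lambda^{k-1}\le C_0\lambda^{k-1}$ for a suitable $C_0 = C_0(n,N,\rho)$ (using $\mu\le\lambda$), this holds for $k=0,\dots,N-I+1$, which is precisely the range Lemma~\ref{l:step} requires when $j=I$. There is a mild circularity to resolve: Proposition~\ref{p:ibp} needs $M_i\in C^N$, which Lemma~\ref{l:step} does provide ($M_i\in C^N(\bar\Omega,\Sym)$), so the order is: invoke Lemma~\ref{l:step} once to \emph{obtain} the $M_i$'s and the identity \eqref{e:steperror} with a \emph{generic} $w$ left symbolic — but actually Lemma~\ref{l:step} states the $M_i$ are independent of $w$, so I can first compute them with $w=0$ (or just read them off from \eqref{d:M_1}--\eqref{d:M_4}), then build $w$ from them via Proposition~\ref{p:ibp}, then re-apply Lemma~\ref{l:step} with this $w$. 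With that $w$ in hand, Lemma~\ref{l:step} gives
\[
Dv^tDv = Du^tDu + \delta a^2\nu\otimes\nu + R + \delta\frac{2\pi}{\lambda^2}\nabla a\otimes\nabla a + 2\delta\sym(Dw) + \delta\sum_{i=1}^4\gamma_i M_i,
\]
and substituting the Proposition~\ref{p:ibp} identities, the $2\delta\sym(Dw^{(i)})$ terms cancel the $\delta\gamma_i M_i$ terms, leaving
\[
Dv^tDv = Du^tDu + \delta a^2\nu\otimes\nu + \delta\frac{2\pi}{\lambda^2}\nabla a\otimes\nabla a + \mathcal{E} + \mathcal{F},
\]
where $\mathcal{E} := R + \delta\sum_{i=1}^4 \gamma_i^{(I)}(\lambda x\cdot\nu)(\mu/\lambda)^I E^{(i)}$ and $\mathcal{F} := \delta\sum_{i=1}^4 F^{(i)}$.

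Finally I would collect the estimates. The $C^0$ bound $\|v-u\|_k\le C\delta^{1/2}\lambda^{k-1}$ for $k=0,\dots,N-I+1$ is exactly \eqref{e:stepestimates} from Lemma~\ref{l:step} with $j=I$. For $\mathcal{E}$: the $R$-contribution is $\|R\|_0\le C\delta^{3/2}$ by \eqref{e:steperrorestimatesR}; the IBP-remainder contribution is $\delta(\mu/\lambda)^I\|E^{(i)}\|_0\le \delta(\mu/\lambda)^I\cdot CK = C\delta(\mu/\lambda)^{I+1}$ using $K=\bar C\mu/\lambda$, and $\gamma_i^{(I)}$ is a bounded periodic function so contributes only a constant; summing gives $\|\mathcal{E}\|_0\le C\delta((\mu/\lambda)^{I+1}+\delta^{1/2})$, which is \eqref{e:error-2}. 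For $\mathcal{F}$: it lies in $\mathrm{span}\{\nu_j\otimes\nu_j\}_{j=n+1}^{n_*}$ since each $F^{(i)}$ does by \eqref{e:spanproperty}, giving \eqref{e:Fspan}; and $\|\mathcal{F}\|_k\le\delta\sum_i\|F^{(i)}\|_k\le C\delta K\lambda^k = C\delta(\mu/\lambda)\lambda^k$, which is \eqref{e:Festi}, valid for $k=0,\dots,N-I+1$ as the $F^{(i)}$ bounds hold in that range. The regularity bookkeeping ($v\in C^{N-I+1}$, $\mathcal{E}\in C^{N-I}$, $\mathcal{F}\in C^{N-I+1}$) follows from matching the loss-of-derivative indices in Lemma~\ref{l:step} and Proposition~\ref{p:ibp}. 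The main obstacle — really the only subtle point — is the zero-mean issue for $\gamma_4$: one must verify that the normalization constant subtracted in $\gamma_4 = \gamma_2^2 - (\text{mean of }\gamma_2^2)$ genuinely produces zero mean so that Proposition~\ref{p:ibp} applies, and confirm that the leftover constant-mean piece is precisely the explicitly-extracted $\delta\frac{2\pi}{\lambda^2}\nabla a\otimes\nabla a$ term (which is harmless since it is not an error term but a structural term appearing on both sides, exactly as in the Kállen-type Lemma~\ref{l:babykaellen}). Everything else is an application of the two black-box results plus Leibniz-rule estimate-chasing.
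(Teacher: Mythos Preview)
Your approach is exactly the paper's: extract the $M_i$ from Lemma~\ref{l:step} (which are independent of $w$), apply Proposition~\ref{p:ibp} to each $(\gamma_i,M_i)$ with $K=\bar C\mu/\lambda$ and depth $I$, and feed the resulting vector field back into the ansatz. There is one slip: you set $w=+\sum_i w^{(i)}$, but with that sign the term $2\delta\,\sym(Dw)$ in \eqref{e:steperror} \emph{adds} to the $2\delta\sum_i\sym(Dw^{(i)})$ coming from the substitution $\gamma_iM_i=2\,\sym(Dw^{(i)})+\cdots$, rather than cancelling it. The paper takes $w=-\sum_i w_i^I$; with that correction your argument goes through verbatim. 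Your instinct about $\gamma_4$ is also right --- the constant subtracted is precisely the mean of $\gamma_2^2$ (the ``$2\pi$'' reflects the paper's convention $\fint=\int_0^{2\pi}$ rather than the averaged integral), so $\gamma_4$ does have zero mean and Proposition~\ref{p:ibp} applies to all four terms.
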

\begin{proof}
As in the proof of Proposition \ref{p:usualstep}, the existence of $\zeta$ satisfying \eqref{a:normal} and the estimates \eqref{e:zetaestimates} follows from \eqref{a:uestmate-21}. We define $v$ through \eqref{d:perturbation} with
$w$ to be chosen. From Lemma \ref{l:step} it follows that $M_i$ in \eqref{e:steperror} satisfy \eqref{assM} with $K= \bar C\frac{\mu}{\lambda}$ for $i=1,\ldots, 4.$  Since $\nu \cdot e_1 \neq 0,$  we can apply Proposition \ref{p:ibp} to find 
\[ w_i^I\in C^{N-I+1}(\bar V, \R^n), \quad E^I_i\in C^{N-I}(\bar V, \Sym), \text{ and }F^I_i\in C^{N-I+1}(\bar V, \Sym)\]
such that 
\[ \gamma_i(\lambda x\cdot \nu) M_i = 2\sym(Dw^I_i) + \gamma_i^I(\lambda x\cdot \nu) \left(\frac{\mu}{\lambda}\right)^I E_i^I + F_i^I\,,\]
for $i=1,\ldots, 4$. Moreover, $F_i^I $ satisfies \eqref{e:spanproperty} and $w_i^I, \, E_i^I, \, F^I_i$ satisfy the corresponding estimates \eqref{e:westimate}--\eqref{e:Festimate} with $K=\bar C\frac{\mu}{\lambda}$. We define our desired $w$ by 
     \[ w=- \sum_{i=1}^4 w_i^I\,.\]
We then have $w\in C^{N-I+1}(\bar V, \R^n)$ with 
\[\|w\|_k \leq CK\lambda^{k-1}\leq C_0\lambda^{k-1} \text{ for } k=0,\ldots, N-I+1\,,\]
so that $w$ satisfies the assumptions \eqref{a:westimates} in Lemma \ref{l:step} for a constant $C_0$ only depending on $n,N$ and $\gamma$. Set
\[F=\sum_{i=1}^4F_i^I\,,\text{ and }\mathcal{F}=\delta F,\]
then $\mathcal{F}\in C^{N-I+1}(\bar V, \Sym)$  satisfies \eqref{e:Fspan} and \eqref{e:Festi}. Finally, from \eqref{e:steperrorestimatesR} and \eqref{e:Eestimate} we find
\begin{align*}
    &\|Dv^tDv-(Du^tDu + \delta a ^2 \nu \otimes \nu + \delta \frac{2\pi}{\lambda^2}\nabla a\otimes \nabla a +\mathcal{F})\|_0\\
    = &\|\delta\sum_{i=1}^4 \gamma_i^I \left(\frac{\mu}{\lambda}\right)^I E_i^I+R\|_0\leq C\delta\left(\frac{\mu}{\lambda}\right)^{I+1}+C\delta^{\sfrac32},
\end{align*}
which then gives \eqref{e:error-2}. 
\end{proof}

\subsection{Adding a metric deficit--Stage Proposition}\label{s:stage}
With the above step propositions we can prove the main iteration proposition. Recall that $h_0 $ is the constant matrix defined in \eqref{d:h_0}. 
\begin{proposition}[Stage]\label{p:stage} Let $U\subset \R^n$ be an open, bounded, simply connected domain. 
Assume $u\in C^2(\bar U,\R^{n+1})$ is an immersion such that 
    \begin{align}
        &\|g- Du^tDu - \delta h_0 \|_0 \leq r\delta\,, \label{a:metricdeficit}\\
        &\|u \|_2 \leq \delta^{\sfrac{1}{2}}\lambda\label{a:C2}
    \end{align}
for some constants $r>0, \, 0<\delta\leq1$ and $\lambda\geq \delta^{-\sfrac{1}{2}}$. 
For any $J\in\mathbb{N},$ there exist positive constants $\delta_*(n, J, g),\, r_*(n, J, g), \,c_*(n, J, g)$ such that the following holds.

If $r\leq r_*$ and $\delta\leq \delta_*, $ then for any simply connected $V \Subset U$, and any 
\[\hat\delta<\frac{r\delta}{|h_0|},  \quad \Lambda\geq c_*\]
there exists an immersion $v\in C^2(\overline{V}, \R^{n+1})$ such that 
\begin{align}
        &\|g- Dv^tDv - \hat\delta h_0 \|_0 \leq C\delta\left(\Lambda^{-J}+\delta^{\sfrac{1}{2}}\right) + \lambda^{-2}\,, \label{e:metricdeficit}\\
        &\|v-u\|_k \leq  C \delta^{\sfrac{1}{2}}\lambda^{k-1}\,, \text{ for }k=0,1\,, \label{e:C1}\\
        &\|v\|_2 \leq C\delta^{\sfrac{1}{2}}\lambda \eta^{-1}\Lambda^{J(n_*-n)+n}\,,\label{e:C2}
\end{align}
 where $\eta=\min\{1, \mathrm{dist}(V, \partial U)\}$ and the constant $C$ only depends on $n, g, J$.
\end{proposition}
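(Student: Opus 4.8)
The plan is to realize the stage as a concatenation of $n_*$ steps, using Proposition~\ref{p:ibpstep} for the first $n$ directions $\nu_1,\dots,\nu_n$ (those with $\nu_i\cdot e_1\neq 0$) and Proposition~\ref{p:usualstep} for the last $n_*-n$ directions. First I would mollify: replace $u$ by $u*\varphi_\ell$ at scale $\ell$ and similarly mollify $g$, choosing $\ell$ a negative power of $\lambda$ so that \eqref{a:C2} upgrades to the higher-order bounds $\|u\|_{k+1}\le C\delta^{1/2}\mu^k$ needed in the hypotheses of the step lemmas, with $\mu$ a fixed power of $\lambda$; the mollification introduces an error of size $\lesssim \lambda^{-2}$ in the metric deficit (hence the $\lambda^{-2}$ in \eqref{e:metricdeficit}) after absorbing the standard commutator/interpolation estimates. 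Then I would apply Lemma~\ref{l:babykaellen} with $N$ chosen large in terms of $J$ and $n$ to decompose the (rescaled, mollified) metric deficit
\[
H \;=\; \frac{g_\ell - D\bar u^t D\bar u - \hat\delta h_0}{\delta}
\]
as $H=\sum_{i=1}^{n_*}(a_i)^2\nu_i\otimes\nu_i+\sum_{l=1}^n \tfrac{2\pi}{\lambda_l^2}\nabla a_l\otimes\nabla a_l + E$, where the frequencies $\lambda_l$ will be the step frequencies and $E$ is negligible; condition \eqref{a:metricdeficit} together with $\hat\delta<r\delta/|h_0|$ and smallness of $r,\delta$ guarantees $\|H-h_0\|_0<r_2$ so the lemma applies, and $a_i\ge r_2>0$.

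Next I would run the $n_*$ steps with a geometrically increasing sequence of frequencies $\mu=\lambda_0\le\lambda_1\le\cdots\le\lambda_{n_*}$, to be fixed at the end. For steps $i=1,\dots,n$ I apply Proposition~\ref{p:ibpstep} with direction $\nu_i$, amplitude $\delta^{1/2}a_i$ (so $\delta a_i^2\nu_i\otimes\nu_i$ is added), frequency $\lambda_i$ and $I=J$; this produces a map whose induced metric gains $\delta a_i^2\nu_i\otimes\nu_i+\delta\tfrac{2\pi}{\lambda_i^2}\nabla a_i\otimes\nabla a_i$ plus an error $\mathcal E_i$ of size $\lesssim\delta((\mu/\lambda_i)^{J+1}+\delta^{1/2})$ and a remainder $\mathcal F_i\in\mathcal V:=\mathrm{span}\{\nu_j\otimes\nu_j:j>n\}$ of size $\lesssim\delta\tfrac{\mu}{\lambda_i}\lambda_i^{\,k}$. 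The crucial structural point — and the one I expect to be the main obstacle to organize cleanly — is that \emph{all} these $\mathcal F_i$ lie in the \emph{same} subspace $\mathcal V$, by the choice of ordering of $\{\nu_i\}$ and Lemma~\ref{l:ibpdecomposition}; so I must re-solve, before starting, for the amplitudes of the \emph{last} $n_*-n$ steps to account for $-\sum_{i=1}^n\mathcal F_i/\delta\in\mathcal V$. Concretely, after the first $n$ steps the accumulated deficit still to be cancelled, divided by $\delta$, has the form $\sum_{j=n+1}^{n_*}b_j\nu_j\otimes\nu_j + (\text{small})$, with $b_j>0$ provided the $\mathcal F_i$ are small enough — which holds once $\lambda_1/\mu$ is large; I then run Proposition~\ref{p:usualstep} for $j=n+1,\dots,n_*$ with amplitude $\delta^{1/2}\sqrt{b_j}$ and frequency $\lambda_j$, each contributing an error $\lesssim\delta(\mu/\lambda_j+\delta^{1/2})$. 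Summing, and using that the leftover Kähllén error $E$ and the $\nabla a_l\otimes\nabla a_l$ corrections were already built into the decomposition \eqref{e:Kaellendecomp}, the total metric deficit is $\le C\delta(\Lambda^{-J}+\delta^{1/2})+\lambda^{-2}$ once the frequency ratios are set to $\Lambda$; this gives \eqref{e:metricdeficit}.

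Finally I would track norms. The $C^1$ bound \eqref{e:C1} is immediate from the step estimates $\|v-u\|_k\le C\delta^{1/2}\lambda_i^{k-1}$ summed over steps, since each increment is $\lesssim\delta^{1/2}$ in $C^0$ and $\lesssim\delta^{1/2}$ in $C^1$ (the $\lambda_i^{-1}$ in $\|v-u\|_0$ and the highest frequency entering $\|v-u\|_1$ with exponent $0$), plus the $O(\ell)$ mollification displacement which is $\lesssim\delta^{1/2}\lambda^{-1}$. For the $C^2$ bound I set $\lambda_0=\mu=\lambda\eta^{-1}$ (or a suitable power — the $\eta^{-1}$ enters through the mollification near $\partial U$ needed to define the step hypotheses on $V\Subset U$) and $\lambda_{i}=\Lambda^{?}\lambda_{i-1}$. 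The point of using $I=J$ in the sharper steps is that each of the first $n$ steps may be taken with only a \emph{mild} frequency increase while still producing a negligible error, so their contribution to $\|v\|_2$ is a bounded power of $\Lambda$; the genuine frequency growth, $\Lambda$ per step, is incurred only in the last $n_*-n$ ordinary steps, giving the factor $\Lambda^{J(n_*-n)+n}$ in \eqref{e:C2} (the $J$ appearing because the mild increases in the sharper steps must be $\Lambda^{?}$ with total exponent $O(J)$ to keep their errors below $\delta\Lambda^{-J}$, while the $+n$ absorbs the $\|v\|_2\lesssim\delta^{1/2}\lambda_i$ contributions). Choosing $\delta_*,r_*$ small enough makes all the smallness conditions in Lemmas~\ref{l:matrixdecomposition}, \ref{l:babykaellen} and the positivity of the $b_j$ hold, and $c_*$ large enough makes $\Lambda\ge c_*$ dominate the fixed constants; collecting everything yields \eqref{e:metricdeficit}–\eqref{e:C2}. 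The main obstacle, as noted, is the bookkeeping that keeps every $\mathcal F_i$ inside the single subspace $\mathcal V$ and shows the resulting modified amplitudes $b_j$ stay positive and suitably bounded — this is exactly where the special basis $\{\nu_i\}$ and the ordering $\nu_i\cdot e_1\neq0$ for $i\le n$ are used.
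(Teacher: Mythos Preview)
Your overall strategy matches the paper's exactly: mollify at scale $\ell \sim \eta/\lambda$, decompose the rescaled deficit $H$ via Lemma~\ref{l:babykaellen}, run $n$ sharper steps (Proposition~\ref{p:ibpstep}) followed by $n_*-n$ ordinary steps (Proposition~\ref{p:usualstep}) with the last amplitudes adjusted to absorb the accumulated $\mathcal F=\sum_{i\le n}\mathcal F_i\in\mathcal V$ (the paper sets $b_j=\sqrt{a_j^2-\delta^{-1}L_j(\mathcal F)}$), and collect the estimates. The handling of $\mathcal F$ and the positivity of the $b_j$ are as you outline.

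However, your frequency allocation is backwards and, as written, would not yield \eqref{e:metricdeficit}. You say the ordinary steps each increase the frequency by a factor $\Lambda$; but Proposition~\ref{p:usualstep} only gives error $\le C\delta\,\mu/\lambda$, so a ratio $\Lambda$ would produce $\delta\Lambda^{-1}$, not $\delta\Lambda^{-J}$. The paper does the opposite: the \emph{sharper} steps take the mild ratio $\lambda_i/\lambda_{i-1}=\Lambda$ for $i=1,\dots,n$ (with $I=J$ the error is $(\mu/\lambda)^{J+1}=\Lambda^{-(J+1)}\le\Lambda^{-J}$), while the \emph{ordinary} steps take the strong ratio $\lambda_i/\lambda_{i-1}=\Lambda^J$ for $i=n+1,\dots,n_*$. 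This gives $\lambda_{n_*}=\lambda_0\,\Lambda^{n+J(n_*-n)}$, so the $+n$ in the exponent of \eqref{e:C2} comes from the first $n$ sharper steps and the $J(n_*-n)$ from the last $n_*-n$ ordinary ones --- the reverse of your parenthetical explanation. Once you swap the two ratios, the rest of your outline goes through verbatim.
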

Observe that, up to lower order terms, the metric deficit decreases by a factor of $K=\Lambda^J$, while the $C^2$ norm increases by a of factor $\Lambda^{J(n_*-n)+n}$. For sufficiently large $J$, this heuristically leads to a growth of the $C^2$ norm by a factor $K^{n_*-n}$, which in turn determines the exponent \eqref{e:exponent}, as discussed in Section \ref{s:mainideas}. The ``loss" of domain from $U $ to $V\Subset U$ in the above proposition is due to a mollification step required to manage the loss of derivatives.
\begin{proof}
As outlined above, we begin by decomposing the metric deficit $g-Du^tDu-\delta h_0$ into $n_* $ primitive metrics using Lemma \ref{l:babykaellen}. We then iteratively add the first $n$ primitive metrics using Proposition \ref{p:ibpstep}, followed by the remaining ones with Proposition \ref{p:usualstep}. However, to manage the loss of derivatives, we first mollify both the immersion and the metric deficit at a suitable length scale.

{\it Step 1. Mollification.}  
We mollify $u$ and $g$ at length-scale 
\begin{equation}\label{d:ell}
    \ell  = \frac{\eta}{\hat C\lambda}\,,
\end{equation}
where $\hat C \geq 1$ is a constant to be chosen below.
Denote $$u_\ell = u\ast \varphi_\ell \text{ and } g_\ell = g\ast \varphi_\ell.$$ Observe that both maps are defined on $\bar V$ since $\hat C\lambda\geq 1$. Moreover, from Lemma \ref{l:mollification} we get the estimates 
\begin{align}
& \|u-u_\ell\|_k \leq C\|u\|_2\ell^{2-k}\leq C\delta^{\sfrac12}\lambda^{k-1}\text{ for } k=0,1,2\,,\label{e:uell-1}\\
& \|u_\ell\|_k \leq C \|u\|_2 \ell^{2-k}  \leq C\delta^{\sfrac{1}{2}}\ell^{1-k}\text{ for } k=2, 3,\ldots, N_*+2\label{e:uell-2}
\end{align} 
for some constants $C$ only depending on $N_*$ and $n$, where we set
\begin{equation}\label{e:Nstar}
    N_*=J(n+1)+n_*\,.
\end{equation}
In particular, the constant only depends on $n, J$.
Moreover, if $\hat C \geq \sqrt{\|g\|_2}$, then 
\begin{equation}\label{e:gmollified}
    \|g-g_\ell\|_0 \leq C\ell^2 \|g\|_2\leq \frac{1}{\lambda^2}\,.
\end{equation}

\smallskip

{\it Step 2. Decomposition. } 
Define the (rescaled) deficit 
\begin{equation}\label{d:H} 
H= \frac{g_\ell - Du_\ell^tDu_\ell}{\delta} - \frac{\hat \delta }{\delta }h_0\,.
\end{equation}
Then $H\in C^\infty(\bar V,\Sym)$ by construction. Note that
\[H - h_0 = \frac{1}{\delta}((g- Du^tDu - \delta h_0)\ast \varphi_\ell)
+ \frac{1}{\delta }((Du^tDu)\ast \varphi_\ell - Du_\ell^tDu_\ell) - \frac{\hat\delta}{\delta }h_0\,.
\]
Hence,
\begin{align*}
    \|H-h_0\|_0 &\leq \frac{1}{\delta} \|(g- Du^tDu - \delta h_0)\ast \varphi_\ell\|_0  + \frac{\hat \delta}{\delta }|h_0| \\
    &\qquad + \frac{1}{\delta} \|(Du^tDu)\ast \varphi_\ell - Du_\ell^tDu_\ell\|_0\\
    &\leq r + r + C \frac{\ell^2 \|u\|_2^2}{\delta}\leq 2 r +  \frac{C}{\hat C^2}\leq \frac{r_2}{2}
\end{align*}
if $r<r_*\leq\frac{r_2}{16}$ and $\hat C$ is large enough depending on $n $ and the constant $r_2=r_2(n,N_*)=r_2(n, J)$ in Lemma \ref{l:babykaellen}. So $\hat C$ depends on $r_2, n, J, g$, i.e., $n,J$ and $g$. 
Similarly,
\begin{align}\label{e:Hestim}
\|H-h_0\|_k&\leq \frac{1}{\delta} \|(g- Du^tDu - \delta h_0)\ast \varphi_\ell\|_k + \frac{1}{\delta} \|(Du^tDu)\ast \varphi_\ell - Du_\ell^tDu_\ell\|_k\nonumber\\
    &\leq C \ell^{-k} +C \frac{\ell^{2-k} \|u\|_2^2}{\delta} \leq C\ell^{-k}
    \end{align}
for any $k=0,\ldots, N_*+1$ and a constant $C$ only depending on $n,N_*$. We first take $\Lambda\geq \frac{2}{r_2}$ and set
\begin{equation}\label{d:lambda}
\lambda_0 = C\ell^{-1}\,, \quad \lambda_i = \Lambda^i \lambda_0 \text{ for } i =1, \ldots, n\,,
\end{equation}
where $C$ is the constant in the above \eqref{e:Hestim}.
In particular, \[\frac{\lambda_0}{\lambda_1} \leq \frac{r_2}{2}.\]
Hence, we can apply the decomposition Lemma \ref{l:babykaellen} with $N=N_*$ and $j=J$ to obtain 
\[a:=a^J = (a^J_1,\ldots, a^J_{n_*}) \in C^{N_*+1-J}(\bar V, \R^{n_*})\text{ and } \mathcal{E}_0:= E^J \in C^{N_*-J}(\bar V, \Sym)\]
satisfying 
\begin{align}
    H= &\sum_{i=1}^{n_*} a_i^2 \nu_i\otimes \nu_i + \sum_{l=1}^n \frac{2\pi}{\lambda_l^2}\nabla a_l\otimes \nabla a_l + \mathcal{E}_0\,,\label{e:Hdecom}\\
    \|a\|_k \leq& C \lambda_0^k \text{ for } k=0,\ldots, N_*-J+1,\label{e:aJestimate}\\
    \|\mathcal{E}_0\|_k \leq& C \Lambda^{-2J-2}\lambda_0^k \text{ for } k=0,\ldots, N_*-J, \label{e:E0estimate}
\end{align}
for a constant $C$ only depending on $N_*, \,J,\, n$, i.e. $J, \,n$.

\smallskip

{\it Step 3. First $n$ perturbations.} Observe that from \eqref{a:metricdeficit} it follows 
$$g-(1+r)\delta \mathrm{Id} \leq Du^t Du \leq g.
$$
Consequently, if $r \leq r_* $ and $\delta_*$ is small enough depending on $g$, we can find $\rho$ only depending on $n, J, g$  such that 
\[\frac{1}{\rho}\textrm{Id}\leq Du^tDu\leq \rho\textrm{Id}, \quad \|a\|_0\leq \rho, \]
where we also used  \eqref{e:aJestimate} for $k=0$. 
Combining with \eqref{e:uell-1}, choosing $\delta$ smaller if necessary, we further have
\begin{equation}
    \label{e:uellimmersion}
    \frac{1}{\rho_0}\textrm{Id}\leq Du_\ell^tDu_\ell\leq \rho_0\textrm{Id}, \quad \|a\|_0\leq\rho_0 \text{ with }\rho_0=2\rho.
\end{equation}
We also define 
\begin{equation}\label{e:rhoidef}
\rho_i=2\rho_{i-1}  \text{ for }i=1,\cdots, n_*\,.
\end{equation}

Set $$u_0 = u_\ell.$$ 
Recall that the basis $\{\nu_i\otimes\nu_i\}$ is ordered such that $\nu_i\cdot e_1\neq0$ for $i=1,\ldots, n$.  
Utilizing \eqref{e:uell-1}, \eqref{e:uell-2} and  \eqref{e:aJestimate}, for $i=1, \ldots, n$,  starting with $u_0$ and \eqref{e:uellimmersion}, after taking $\Lambda$ large, we iteratively apply Proposition \ref{p:ibpstep} with
\[N=N_*-J-(i-1)(J+1),\quad I=J,\]
and
\[u=u_{i-1}, \quad \rho=\rho_{i-1},\quad \mu=C\lambda_{i-1}, \quad \lambda=\lambda_i, \]
to get 
\[u_i\in C^{N_*-J-(J+1)i+2}(\bar V, \R^{n+1}),\quad \mathcal{E}_i\in C^{N_*-J-(J+1)i+1}(\bar V, \Sym)\]
such that
\begin{align}
    &\frac{1}{\rho_i}\textrm{Id}\leq Du_i^tDu_i\leq \rho_i\textrm{Id},\label{e:uiimmersion}\\
    &Du_i^tDu_i = Du^t_{i-1}Du_{i-1} + \delta a_i^2 \nu_i\otimes \nu_i + \delta\frac{2\pi}{\lambda_i^2} \nabla a_i \otimes \nabla a_i +  \mathcal{E}_i+\mathcal{F}_i,\label{e:error-i}\\
    &\|u_i-u_{i-1}\|_k \leq C\delta^{\sfrac{1}{2}} \lambda_i^{k-1} \text{ for } k= 0,\ldots, N_*-J-(J+1)i+2\,,\label{e:uidifference}\\
    &\|\mathcal{E}_i\|_0 \leq C \delta(\Lambda^{-J-1}+\delta^{\sfrac12})\,,\label{e:Eiestimate}
\end{align}
and 
\[ \mathcal{F}_i \in C^{N_*-J-(J+1)i+2}(\bar V, \Sym)\]
satisfying 
\begin{align}
    &\mathcal{F}_i\in \mathrm{span}\{\nu_j\otimes \nu_j : j=n+1,\ldots,n_*\}\,, \label{e:Fispan}\\
  &\|\mathcal{F}_i\|_k \leq C\delta\Lambda^{-1}\lambda_i^k \text{ for } k=0,\ldots, N_*-J-(J+1)i+2 \label{e:Fiestimate}
       \end{align}
for a constant $C$ only depending on $J, \,n,\, g$.  We only need to verify \eqref{e:uiimmersion}, since \eqref{e:error-i}-\eqref{e:Fiestimate} follow directly from Proposition \ref{p:ibpstep}. In fact, by induction, with \eqref{e:uiimmersion} for $i-1$, employing \eqref{e:uidifference} for $k=1$, we compute
\[Du_i^tDu_i-Du_{i-1}^tDu_{i-1}=Du_i^t(Du_i-Du_{i-1})+(Du_i-Du_{i-1})^tDu_{i-1}\]
to deduce \eqref{e:uiimmersion} up to choosing $\delta$ smaller. Thus, in this step we have constructed
 $$u_n\in C^{N_*-J-(J+1)n+2}(\bar V, \R^{n+1})$$
 such that 
 \begin{equation}\label{e:firststeps}
     \begin{split} Du_n^tDu_n &= Du^t_0Du_0+\delta  \sum_{i=1}^n a_i^2 \nu_i\otimes \nu_i + \delta\sum_{i=1}^n\frac{2\pi}{\lambda_i^2} \nabla a_i \otimes \nabla a_i + \mathcal{E}+\mathcal{F}\,,
     \end{split}\end{equation}
by \eqref{e:error-i}, where we abbreviated
\[ \mathcal{E}:=\sum_{i=1}^n \mathcal{E}_i, \quad \mathcal{F}:=\sum_{i=1}^n\mathcal{F}_i\,. 
\]
By \eqref{e:uidifference}, \eqref{e:uell-1} and \eqref{e:uell-2}, we have
\begin{equation}
    \label{e:unestimate}
    \|u_n\|_{k+1}\leq\sum_{i=1}^n\|u_i-u_{i-1}\|_{k+1}+\|u_0\|_{k+1}\leq C\delta^{\sfrac12}\lambda_n^k
\end{equation}
for $k=1, \ldots, N_*-J-(J+1)n+2.$ From \eqref{e:Eiestimate} and \eqref{e:Fiestimate}, we also have
\begin{align}
\|\mathcal{E}\|_0&\leq \delta(\Lambda^{-J-1}+\delta^{\sfrac12}) \,,\label{e:finalerror1} \\
\|\mathcal{F}\|_k& \leq C\delta\Lambda^{-1}\sum_{i=1}^n\lambda_i^k \leq C \delta\Lambda^{-1}\lambda_n^k \label{e:Festimate-1n}
\end{align}
 for $ k=0,\ldots, N_*-J-(J+1)n+2\,.$ Moreover, by \eqref{e:Fispan}, we get
 \begin{equation}
     \label{e:Fdecomposition}
     \mathcal{F}\in \mathrm{span}\{\nu_j\otimes \nu_j: j=n+1,\ldots, n_*\}\,.
 \end{equation}
 \smallskip

 {\it Step 4. Remaining $n_*-n$ perturbations.} Since $\nu_j\cdot e_1 = 0$ for any $j=n+1,\ldots, n_*$, Proposition \ref{p:ibpstep} cannot be applied. We now add the remaining $n_*-n $ primitive metrics in the decomposition of  $H$ by applying Proposition \ref{p:usualstep} iteratively. Additionally, since \eqref{e:Fdecomposition} holds, we can simultaneously cancel the large error term $\mathcal F $.  By \eqref{e:Fdecomposition} and Lemma \ref{l:matrixdecomposition} we write 
\begin{equation}
    \label{e:Fdecom}
    \mathcal{F}= \sum_{j=n+1}^{n_*} L_j(\mathcal{F}) \nu_j\otimes \nu_j\,,
\end{equation} 
where $L_j$ are the linear maps from Lemma \ref{l:matrixdecomposition}. It then follows
\[\|L_j(\mathcal{F})\|_0\leq C \|\mathcal{F}\|_0 \leq C\delta\Lambda^{-1}.\]
 Define the adjusted amplitudes $b_j$ by 
\begin{equation}\label{d:b}
b_j = \sqrt{a_j^2-\delta^{-1}L_j(\mathcal{F})}\,,
\end{equation}
for $j=n+1,\ldots,n_*$.  Since $(a_j)^2\geq r_2^2$ by Lemma \ref{l:babykaellen}, it follows that $b_j$ is well defined if  we choose $\Lambda$ so large that $$C\Lambda^{-1}\leq\frac14r_2^2\,,$$
and then $b_j, j=n+1,\ldots, n_*$ are well-defined. Furthermore, by  Lemma \ref{l:composition} and estimates \eqref{e:ajestimate} and \eqref{e:Festimate-1n}, we get
\begin{equation}\label{e:bestimates}
  \|b_j\|_k\leq C \lambda_n^k \text{ for } k=0,\ldots, N_*-J-(J+1)n+2\,,
\end{equation}
For $i=n+1,\ldots, n_*$ we now fix the frequencies 
\begin{equation}\label{d:lastfrequencies}
     \lambda_{i}= \Lambda^J\lambda_{i-1} = \Lambda^{(i-n)J+n}\lambda_0\,.
\end{equation}
Note that from Step 3, we have
\begin{equation}
    \label{e:unimmersion}
    \frac1{\rho_n}\textrm{Id}\leq Du_n^tDu_n\leq\rho_n\textrm{Id}.
\end{equation}
Hence, with \eqref{e:unimmersion}, \eqref{e:unestimate} and \eqref{e:bestimates}, starting with $u_n$, for $i=n+1,\ldots, n_*$, we can inductively apply Proposition \ref{p:usualstep} with \[N=N_*-J-n(J+1)-(i-(n+1)),\] and
\[u=u_{i-1}, \quad \rho:=\rho_{i-1}~(\text{defined in \eqref{e:rhoidef}}),\quad \mu=C\lambda_{i-1}, \quad \lambda=\lambda_i ~(\text{defined in \eqref{d:lastfrequencies}}), \]
to get 
\[u_i\in C^{N_*-J-n(J+1)n-(i-n)+2}(\bar V, \R^{n+1}),\quad \mathcal{E}_i\in C^{N_*-J-(J+1)n-(i-n)+1}(\bar V, \Sym)\]
such that
 \begin{align}
 &\frac{1}{\rho_i}\textrm{Id}\leq Du_i^tDu_i\leq\rho_i\textrm{Id},\label{e:uniimmersion}\\
 &\|u_i-u_{i-1}\|_k \leq C\delta^{\sfrac{1}{2}} \lambda_i^{k-1} \text{ for } k= 0,\ldots,N_*-J-(J+1)n-(i-n)+2\,,\label{e:uni-diff}\\
      &Du_i^tDu_i = Du^t_{i-1}Du_{i-1} +\delta (a_i^2-\delta^{-1}L_i(\mathcal{F}))\nu_i\otimes \nu_i +  \mathcal{E}_i\,,\label{e:error-ni}\\
      &\|\mathcal{E}_i\|_0\leq C\delta \left(\Lambda^{-J}+\delta^{\sfrac{1}{2}}\right)\,,
      \label{e:Eniestimate}
 \end{align} 
 where the constant $C$ only depends on $J$ and $n$. Note that \eqref{e:uniimmersion} follows from the same argument as that in Step 3 and the other conclusions \eqref{e:uni-diff}-\eqref{e:Eniestimate} follow from Proposition \ref{p:usualstep} and the choice of $\lambda_i$ in \eqref{d:lastfrequencies}. Therefore, in this step we get a map
 \[u_{n_*}\in C^{2}(\bar V, \R^{n+1})\]
 since 
\[N_*-J-(J+1)n-(n_*-n)+2=N_*-J(n+1)-n_*+2=2\]
 by \eqref{e:Nstar}.  Moreover, $u_{n_*}$ satisfies 
 \begin{equation}\label{e:laststeps}
 Du_{n_*}^tDu_{n_*} = Du^t_{n}Du_{n} +\sum_{i=n+1}^{n_*}\delta (a_i^2-\delta^{-1}L_i(\mathcal{F}))\nu_i\otimes \nu_i +  \mathbb{E}\,,
 \end{equation}
by summing \eqref{e:error-ni} from $n+1$ to $n_*$,
where we wrote
\[\mathbb{E}=\sum_{i=n+1}^{n_*}\mathcal{E}_i\,.\]
By \eqref{e:uni-diff} and \eqref{e:unestimate}, we get
\begin{equation}
    \label{e:unstar}
    \|u_{n_*}\|_{k+1}\leq\sum_{i=n+1}^{n_*}\|u_i-u_{i-1}\|_{k+1}+\|u_n\|_{k+1}\leq C\max\{1, \delta^{\sfrac12}\lambda_{n_*}^k\}
\end{equation}
for $k=0, 1$. From \eqref{e:Eniestimate}, we also have
\begin{equation}
    \label{e:finalerror2}
    \|\mathbb{E}\|_0\leq \sum_{i=n+1}^{n_*}\|\mathcal{E}_i\|_0\leq C\delta \left(\Lambda^{-J}+\delta^{\sfrac{1}{2}}\right).
\end{equation}
\smallskip

{\it Step 5. Conclusion. } Set \[v:=u_{n_*}\in C^{2}(\bar V, \R^{n+1}).\] Next we show $v$ is our desired immersion. From \eqref{e:laststeps} and \eqref{e:firststeps}, we obtain
 \begin{align*} 
 Dv^tDv =& Du_n^tDu_n + \sum_{i=n+1}^{n_*}\delta (a_i^2-\delta^{-1}L_i(\mathcal{F}))\nu_i\otimes \nu_i +  \mathbb{E}\\
  =&  Du_0^tDu_0 +\delta  \sum_{i=1}^n a_i^2 \nu_i\otimes \nu_i + \delta\sum_{i=1}^n\frac{2\pi}{\lambda_i^2} \nabla a_i \otimes \nabla a_i + \mathcal{E} +  \mathcal{F}\\
 &+\sum_{i=n+1}^{n_*}\delta a_i^2\nu_i\otimes \nu_i-\sum_{i=n+1}^{n_*}L_i(\mathcal{F}))\nu_i\otimes \nu_i +  \mathbb{E}\,.
\end{align*}
By \eqref{e:Fdecom}, \eqref{e:Hdecom} and  \eqref{d:H}, we have
\begin{align*}
 Dv^tDv =&  Du_0^tDu_0 +\delta  \sum_{i=1}^{n_*} a_i^2 \nu_i\otimes \nu_i + \delta\sum_{i=1}^n\frac{2\pi}{\lambda_i^2} \nabla a_i \otimes \nabla a_i + \mathbb{E} + \mathcal{E} \\
  =&  Du_0^tDu_0 + +\delta \left(H -\mathcal{E}_0\right) +\mathbb{E} + \mathcal{E}\\
  =& g_\ell-\hat \delta h_0 +\delta\mathcal{E}_0+\mathbb{E} + \mathcal{E}\,.
 \end{align*}
 Combining \eqref{e:gmollified}, \eqref{e:E0estimate}, \eqref{e:finalerror1} and \eqref{e:finalerror2} yields 
\begin{align*}
    \|g-Dv^tDv -\hat \delta h_0 \|_0&\leq \|g-g_\ell\|_0 + \delta\|\mathcal{E}_0\|_0 + \|\mathcal{E}\|_0+\|\mathbb{E}\|_0 \\
&\leq C\|g\|_2\ell^2 + C\delta\left(\Lambda^{-2J-2} +\Lambda^{-(J+1)}+ \Lambda^{-J}+\delta^{\sfrac{1}{2}}\right) \\
&\leq \lambda^{-2}+C\delta\left( \Lambda^{-J}+\delta^{\sfrac{1}{2}}\right)
\end{align*}
using $\Lambda\geq 1 $. This shows  \eqref{e:metricdeficit}. 

It remains to show \eqref{e:C1} and \eqref{e:C2}. The estimate \eqref{e:C2} follows from \eqref{e:unstar} and
\[\lambda_{n_*}=\lambda_0\Lambda^{J(n_*-n)+n}=\hat C \eta^{-1} \lambda\Lambda^{J(n_*-n)+n},\]
where we used \eqref{d:lambda}  and \eqref{d:lastfrequencies}.
For \eqref{e:C1} we combine \eqref{e:uidifference} and \eqref{e:uni-diff} to find for $k=0,1$
\begin{align*}
    \|v-u\|_k\leq& \|u-u_\ell\|_k+\sum_{i=1}^{n_*} \|u_{i}-u_{i-1}\|_k \\
    \leq &C\ell^{2-k} \|u\|_2+ C\delta^{\sfrac{1}{2}}\sum_{i=1}^{n_*} \lambda_i^{k-1} \\
    \leq &C\delta^{\sfrac{1}{2}}\lambda^{k-1}\,,
\end{align*}
where we used the definition of $\ell$ in \eqref{d:ell} and $\lambda_i\geq \lambda_0=\ell^{-1}\geq \lambda$ for any $i$.  The conclusion that $v$ is an immersion follows from  \eqref{e:uniimmersion}. This completes the proof. 
\end{proof}

\section{Proof of Theorem \ref{t:main}}\label{s:pfoftmain}
Given the short immersion $\underline u$  we want to iteratively apply Proposition \ref{p:stage} to generate a sequence $\{u_q\}$ of short immersions converging in $C^{1,\alpha}$ to an isometric immersion $u$ in any $C^0$ neighbourhood of $\underline u$. In order to apply Proposition \ref{p:stage} we first construct an short immersion satisfying the conditions in Proposition \ref{p:stage}. This is achieved by a classical Nash-Kuiper iteration. We then iterate Proposition \ref{p:stage} to generate $\{u_q\}$ such that $\{Du_q^tDu_q\}$ converge to the target metric $g$. The proof is divided into three subsections.


\subsection{Initial short immersion}
 Because of the mollification step at the beginning of each stage proposition, the maps $\{u_q\}$ will be defined on smaller and smaller domains. In order to end up with an isometry on $\Omega$ we therefore need to extend both the metric $g$ and the short map $\underline u$.
 Extend $g$ and $\underline u$ to $\R^n$ such that
\[ \|\underline u\|_{C^1(\R^n)} \leq C\|\underline u\|_{C^1(\bar \Omega)}\,, \quad \| g\|_{C^2(\R^n)} \leq C\|g\|_{C^2(\bar \Omega)}\,,\]
for a constant only depending on $n, \,\Omega$. Such an extension procedure is well-known (see e.g. \cite{Whitney}).  By mollification and scaling  we can also assume that $\underline u$ is smooth and strictly short for $g$ on $\bar \Omega$.
By continuity and convexity we can find an open and simply connected set $V_0 \Subset \R^n$ depending on $\underline u$ and $g$, and $0<\underline\delta<\delta_*$ with $\delta_*$ from Proposition \ref{p:stage} such that 
\begin{align}
    &\Omega \Subset V_0,\nonumber\\
    & g  - D\underline u^tD\underline u> 2\underline\delta h_0 \text{ on } \bar V_0\,,\label{e:short1}\\
    &\underline u \text{ is an immersion on } \bar V_0\nonumber
\end{align}



By \eqref{e:short1}, for any $0<\delta_0<\underline\delta$, we have
\begin{align*}
    g-D\underline u^tD\underline u - \delta_0 h_0 \geq\underline\delta h_0.
\end{align*}
We can therefore decompose 
\[ g-D\underline u^tD\underline u - \delta_0 h_0  = \sum_{i=1}^N a_i^2 \eta_i\otimes \eta_i \] 
for some finite number $N\in \N$ of unit vectors $\eta_i \in \S^{n-1}$ and $a_i \in C^\infty(\bar V_0)$ (see for example \cite[Step 1 in Section 7]{DIS}, \cite{Laszlo}). Then by \cite[Proposition 3.2]{CaoSze2022} we obtain for any $\mu\geq C_0(\underline u, g)$ a smooth immersion $u_0$ and metric error $\mathcal{E}_0$ such that
\[Du_0^tDu_0=D\underline u^tD\underline u+\sum_{i=1}^N a_i^2 \eta_i\otimes \eta_i+\mathcal{E}_0\]
with
\begin{align}\label{e:u0-estimate}
    \|u_0-\underline u\|_0\leq \frac{C_1}{\mu},\quad \|u_0\|_2\leq C_1\mu^{N},\quad \|\mathcal{E}_0\|_0\leq \frac{C_1}{\mu}\,,
\end{align}
for a constant $C_1$ depending on $\underline u,g$ and $N$, but not on $\delta_0$. Thus we have
\begin{equation}
    \label{e:u0-deficit}
    g-Du_0^tDu_0-\delta_0 h_0=-\mathcal{E}_0\,.
\end{equation}
We take
\[\delta_0:=a^{-\tau}\,,\, \quad \lambda_0:=a^{(N+1)\tau}\]
with some constant $1>\tau>0$ and large constant $a$ to be fixed below. For $r\leq 1$ to be fixed below, we choose
\begin{equation}\label{d:mu}
     \mu:= \max\{C_1r^{-1}a^{\tau}, 2C_1\epsilon^{-1}, C_0(\underline u, g)\}=C_1r^{-1}a^{\tau},
\end{equation}
provided that $a$ is large. In particular, with this choice we have
\begin{equation}\label{e:delta0}
\frac{C_1}{\mu}\leq\min\{\frac\epsilon2,\, r\delta_0\}.
\end{equation}
Taking $a$ larger depending on $\underline u, g, r, \tau, \epsilon$ and $N$ we can ensure
\begin{equation}
    \label{e:lambda0}
    \delta_0^{\sfrac12}\lambda_0=a^{(N+\frac12)\tau}\geq C_1^{N+1} r^{-N}a^{N\tau}=C_1\mu^N.
\end{equation}
Hence from \eqref{e:u0-deficit}, \eqref{e:delta0} and \eqref{e:lambda0}, we get the initial short immersion $u_0\in C^\infty(\bar V_0, \R^{n+1})$ such that
\begin{equation}\label{e:u0-induction}
\begin{split}
    &\|u_0-\underline u\|_0\leq\frac{\epsilon}{2}, \quad \|u_0\|_2\leq\delta_0^{\sfrac{1}{2}}\lambda_0,\\
    &\|g-Du_0^tDu_0-\delta_0 h_0\|_0\leq r\delta_0.
\end{split} 
\end{equation}

\subsection{Iteration} Fix any desired H\"older exponent 
\[ \alpha <\frac{1}{n^2-n+1}= \frac{1}{1+2(n_*-n)}\,\]
and choose $J\in \N$ large such that 
\begin{equation}\label{e:Jexponent}
    \beta:=\frac{J}{J(1+2(n_*-n))+4n}\in(\alpha,\,\frac{1}{1+2(n_*-n)}) \,.
\end{equation}  
Now we choose a sequence $\{V_q\}_{q\in \N}$ of open and simply connected domains such that $\Omega \Subset  V_{q+1} \Subset V_q$ for any $q\geq0$,  $V_q \downarrow \bar\Omega $ and 
\begin{equation}\label{e:distance}
    \mathrm{dist}(V_{q}, \partial V_{q-1}) \geq 2^{-q}\mathrm{dist}(\Omega,\partial V_0) =:\eta_q \text{ for }q\geq1\,.
\end{equation} 
Define
\begin{equation}
    \label{e:deltalambda}
    \delta_{q}=\delta_0a^{1-b^q},\quad \lambda_{q}=\lambda_0 a^{\frac{1}{2\beta}(b^q-1)}
\end{equation}
for some $1<b<1+\frac\tau2<\frac32$ to be chosen below. Set
\begin{equation}
    \label{e:Lambda}
    \Lambda_q=K(\delta_{q}\delta_{q+1}^{-1})^{\frac1J}
\end{equation}
with constant $K$ to be fixed.  Then we can take $a>a_0(\underline u, g, r, n, J, b, N, \mathrm{dist}(\Omega,\partial V_0))$ sufficiently large such that the claim below holds.  

{\bf Claim:} for any $q\geq0,$ we are able to apply Proposition \ref{p:stage} iteratively to construct a sequence of immersions $\{u_q\}_{q\in \N}$ 
such that 
\begin{align}
    &u_q\in C^2(\bar V_q,\R^{n+1})\,, \label{e:qass0}\\
    &\|g- Du_q^tDu_q- \delta_q h_0\|_0\leq r\delta_q\,, \label{e:qass1}\\
    &\|u_q\|_2\leq \delta_q^{\sfrac{1}{2}}\lambda_q\,, \label{e:qass3}\\
    &\|u_{q+1}- u_{q} \|_k\leq C\delta_{q}^{\sfrac12}\lambda_q^{-k}, \text{ for } k=0, 1,\label{e:qass4} 
\end{align}
where the  constant $C$ depends on $n, g, \underline u.$

Indeed, by \eqref{e:u0-induction}, $u_0$ has been constructed if we choose $r\leq r_*(n,J,g)$ in the definition of $\mu$ in \eqref{d:mu}. Here, $r_*(n,J,g)$ is the constant in Proposition \ref{p:stage} with $J$ defined as in \eqref{e:Jexponent}. Now   assume $u_q$ has already been constructed. Taking $a$ large we have
 \[\delta_{q+1}\leq \frac{r\delta_q}{|h_0|}\,.\] 
Upon choosing the constant $K$ in \eqref{e:Lambda} so large that $K>c_*$ with $c_*$ from Proposition \ref{p:stage} we can apply  Proposition \ref{p:stage}
with
 \[ U=V_q,\, V=V_{q+1},\, u=u_q\,,\, \delta = \delta_q\,,\, \hat\delta = \delta_{q+1}\,,\,\lambda=\lambda_{q}, \,\eta = \eta_{q+1},\, \Lambda=\Lambda_q\] 
 to get $v\in C^2(\bar V_q, \R^{n+1})$. Set
 \[u_{q+1}:=v\in C^2(\bar V_q, \R^{n+1}).\]
 Then \eqref{e:qass0} holds for $q+1$. To show \eqref{e:qass1} for $q+1$,  using  \eqref{e:metricdeficit}, \eqref{e:deltalambda}, \eqref{e:Lambda} and $b<1+\frac\tau2$, we get 
 \begin{align*}
     \|g- Du_{q+1}^tDu_{q+1}- \delta_{q+1} h_0\|_0\leq& C\delta_q(\Lambda_q^{-J}+\delta_q^{\sfrac12})+\lambda_q^{-2}\\
     \leq & C\delta_{q+1}\big(K^{-J}+\delta_{q+1}^{-1}\delta_q^{\sfrac32}+\delta_{q+1}^{-1}\lambda_q^{-2}\big)\\
     \leq &C\delta_{q+1}\big(K^{-J}+a^{b-1-\frac\tau2}+a^{-2N\tau-\tau-1+b}\big)\\
     \leq& r\delta_{q+1}\,,
 \end{align*}
after taking $K\geq 3C$ and $a$ larger. By \eqref{e:C2}, we have
 \begin{align*}
     \|u_{q+1}\|_2\leq& C\delta_{q}^{\sfrac12}\lambda_q\eta_{q+1}^{-1}\Lambda_q^{J(n_*-n)+n}=\delta_{q+1}^{\sfrac12}\lambda_{q+1}A
 \end{align*}
with
\begin{align*}
    A:=& C\frac{\delta_{q}^{\sfrac12}\lambda_q}{\delta_{q+1}^{\sfrac12}\lambda_{q+1}}\eta_{q+1}^{-1}\Lambda_q^{J(n_*-n)+n}\\
     \leq&Ca^{(\frac12-\frac1{2\beta})(b-1)b^q+(b-1)b^q(n_*-n+\frac{n}{J})}K^{n_*-n+\frac{n}{J}}2^{q+1}\mathrm{dist}(\Omega,\partial V_0)^{-1}\,.
\end{align*}
By \eqref{e:Jexponent}, we have
\[\frac1{2\beta}-\frac12=n_*-n+\frac{2n}{J}\]
and then
\[A\leq Ca^{-\frac{n}{J}(b-1)b^q}K^{n_*-n+\frac{n}{J}}2^q\mathrm{dist}(\Omega,\partial V_0)^{-1}\leq 1\]
after taking $a\geq a_0(\underline u, g, r, n, J, b, \mathrm{dist}(\Omega,\partial V_0))$ large. Thus we get \eqref{e:qass3}. Moreover, \eqref{e:qass4} follows directly from \eqref{e:C1}. The claim is then proved.

\subsection{Convergence} Finally, we show that the sequence constructed in the previous section converges in $C^{1,\alpha}(\bar\Omega, \R^{n+1}) $ to an isometric immersion on $\bar \Omega$. By interpolation and $\delta_{q}^{\sfrac12}\lambda_{q}\leq \delta_{q+1}^{\sfrac12}\lambda_{q+1}$ for any $q$ it holds
\begin{align*} \|u_{q+1}-u_{q}\|_{1,\alpha} &\leq C_\alpha \|u_{q+1}-u_{q}\|_1^{1-\alpha}\|u_{q+1}-u_{q}\|_2^\alpha\\
&\leq C\delta_q^{\frac12(1-\alpha)}(\delta_{q+1}^{\sfrac12}\lambda_{q+1})^\alpha\\
&=C\delta_0^{\sfrac12}\lambda_0^\alpha a^{\frac12-\frac{\alpha}{2\beta}}a^{\frac12b^q(\frac{\alpha}{\beta}{b}-1-(b-1)\alpha)}\,.
\end{align*}
Since $\alpha<\beta$, we can take 
\[1<b<\min\left\{1+\frac\tau 2,\, \frac{\beta-\alpha\beta}{\alpha-\alpha\beta}\right\}=1+\min\left\{\frac\tau2,\,\frac{\beta-\alpha}{\alpha-\alpha\beta}\right\}\]
and also $a$ large such that 
\[\|u_{q+1}-u_{q}\|_{1,\alpha}\leq \left(\delta_0^{\sfrac12}\lambda_0^\alpha a^{\frac12-\frac{\alpha}{2\beta}}\right) 2^{-q-1},
\]
and then $\{u_q\}$ is a Cauchy sequence in $C^{1,\alpha}(\bar\Omega, \R^{n+1})$. Let $u\in C^{1,\alpha}(\bar\Omega, \R^{n+1})$ be the limit. From \eqref{e:qass1} and $\delta_q\to0$ as $q\to\infty,$ we have
\[g-Du^tDu=\lim_{q\to\infty}(g-Du_q^tDu_q)=0.\]
Thus $u$ is an isometric immersion for $g$ on $\bar\Omega.$ Furthermore, by \eqref{e:qass4} for $k=0$, we have
\begin{align*}
    \|u-u_0\|_0\leq& \sum_{q=0}^\infty\|u_{q+1}-u_q\|_0\leq C\sum_{q=0}^\infty\delta_q^{\sfrac12}\lambda_q^{-1}\\
    \leq& C\delta_0^{\sfrac12}\lambda_0^{-1}a^{\frac12+\frac1{2\beta}}\sum_{q=0}^\infty a^{-b^q(\frac12+\frac1{2\beta})}\\
    \leq& C a^{-\frac12\tau-(N+1)\tau}a^{\frac12+\frac1{2\beta}}\sum_{q=0}^\infty a^{-(q+1)(\frac12+\frac1{2\beta})}\\
    \leq& Ca^{-\frac12\tau-(N+1)\tau}\leq\frac\epsilon2,
\end{align*}
by taking $a$ sufficiently large.  The above inequality together with \eqref{e:u0-induction} implies
\[\|u-\underline u\|_0\leq\|u-u_0\|_0+\|u_0-\underline u\|\leq\epsilon\,.\]
Therefore, the proof is complete.
\smallskip

\begin{appendix}
\section{H\"older spaces and its properties}
Let $\beta=(\beta_1, \ldots, \beta_n)\in\R^n$ be any multi-index and $|\beta|=\beta_1+\beta_2+\cdots+\beta_n.$ For any function $f:\R^n\supset\Omega\rightarrow\R$ and $m\in\mathbb{N}$, we define its  supreme norms as follows.
\begin{equation*}
\|f\|_0=\sup_{x\in\Omega}f(x), ~~\|f\|_m=\sum_{j=0}^m\max_{|\beta|=j}\|\partial^{{\beta}} f\|_0,
\end{equation*}
Its H\"older semi-norms are defined as
\begin{align*}
[f]_{\alpha}&=\sup_{x\neq y}\frac{|f(x)-f(y)|}{|x-y|^\alpha},\\
[f]_{m+\alpha}&=\max_{|\beta|=m}\sup_{x\neq y}\frac{ |\partial^{{\beta}} f(x)-\partial^{{\beta}} f(y)|}{|x-y|^\alpha},
\end{align*}
for any $ 0<\alpha\leq1.$ Here  $\partial^{{\beta}}=\partial_{x_1}^{\beta_1}\partial_{x_2}^{\beta_2}\cdots\partial_{x_n}^{\beta_n}.$
Then the H\"older norms are given as
$$\|f\|_{m+\alpha}=\|f\|_m+[f]_{m+\alpha}. $$
Let $C^{m, \alpha}(\overline \Omega)$ be the H\"older space  defined on $\Omega$ such that $f\in C^{m, \alpha}(\overline\Omega)$ if $\|f\|_{m+\alpha}$ is finite. Interpolation inequality of H\"older norms reads
$$[f]_r\leq C\|f\|_0^{1-\frac{r}{s}}[f]_s^{\frac{r}{s}}$$
for $s>r\geq0.$  
If $f_1, f_2\in C^{m, \alpha}(\overline\Omega)$ for any $m\in\mathbb{N}$ and $\alpha\in[0, 1)$, then we have
\begin{equation}\label{e:Leibniz}
\|f_1f_2\|_{m+\alpha}\leq C(m, \alpha)(\|f_1\|_0\|f_2\|_{m+\alpha}+\|f_1\|_{m+\alpha}\|f_2\|_0).
\end{equation}
Moreover, we have the following lemma for compositions of H\"older continuous maps (see e.g. \cite{DIS}).
\begin{lemma}
    \label{l:composition}
    Let $u:\R^n\to\Omega$ and $F:\Omega\to\R$ be two smooth functions, with $\Omega\subset\R^N$. Then, for every natural number $m>0$ , there is a constant $C$
 (depending only on $m,N,n$) such that
 \begin{align*}
     [F\circ u]_m\leq& C([F]_1[u]_m+\|DF\|_{m-1}[u]_0^{m-1}[u]_m);\\
     [F\circ u]_m\leq& C([F]_1[u]_m+\|DF\|_{m-1}[u]_1^{m-1}).
 \end{align*}
\end{lemma}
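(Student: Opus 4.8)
The plan is to combine the multivariate Fa\`a di Bruno formula with the Gagliardo--Nirenberg interpolation inequality for H\"older seminorms recorded in this appendix. Both estimates follow from one and the same computation; they differ only in the choice of endpoints used in the interpolation step.

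First I would write $[F\circ u]_m=\max_{|\beta|=m-1}[\partial^\beta(F\circ u)]_1$ and expand, for each multi-index $\beta$ with $|\beta|=m-1$, the derivative $\partial^\beta(F\circ u)$ by the chain rule: it equals a finite linear combination, with purely combinatorial coefficients depending only on $m$, $n$ and $N$, of terms $(\partial^\gamma F)(u)\prod_{l=1}^k\partial^{\beta^{(l)}}u_{j_l}$ with $1\le k\le m-1$, $\beta^{(1)}+\cdots+\beta^{(k)}=\beta$, $|\gamma|=k$ and $j_l\in\{1,\dots,N\}$. Taking one more difference quotient and distributing it with the Leibniz rule \eqref{e:Leibniz}, $[F\circ u]_m$ is bounded by the $C^0$-norms of finitely many expressions of the same shape, now carrying at most $m$ derivative factors of $u$ whose orders sum to $m$ and a single derivative of $F$ of order $\le m$. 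In particular only $\|D^jF\|_0$ for $j\le m$ enters, i.e.\ only $\|DF\|_{m-1}$, which explains why $C^m$-regularity of $F$ is enough.

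It then remains to bound a generic such term in $C^0$. If $k=1$ the term is, schematically, $(DF)(u)\cdot D^m u$, pointwise bounded by $[F]_1[u]_m$; this gives the first summand of both inequalities. If $2\le k\le m$, I would bound the $F$-factor crudely by $\|\partial^\gamma F\|_0\le\|D^kF\|_0\le\|DF\|_{m-1}$ and the product $\prod_l\partial^{\beta^{(l)}}u_{j_l}$ by interpolation. For the first inequality, interpolating each factor between orders $0$ and $m$ gives $[u]_{|\beta^{(l)}|}\le C\|u\|_0^{\,1-|\beta^{(l)}|/m}[u]_m^{\,|\beta^{(l)}|/m}$, and multiplying over $l$ (using $\sum_l|\beta^{(l)}|=m$) yields $\prod_l[u]_{|\beta^{(l)}|}\le C[u]_0^{\,k-1}[u]_m\le C[u]_0^{\,m-1}[u]_m$ since $1\le k-1\le m-1$. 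For the second inequality one interpolates instead between orders $1$ and $m$, so that each factor contributes a power of $[u]_1$ together with a power of $[u]_m$ whose total exponent $\sum_l(|\beta^{(l)}|-1)/(m-1)=(m-k)/(m-1)$ is strictly less than $1$ because $k\ge2$; Young's inequality then turns the product into a sum dominated by a power of $[u]_1$ and by $[u]_m$, and summing over the finitely many terms gives the claimed bound.

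I do not expect any genuine obstacle: this is the standard H\"older composition estimate used throughout the convex-integration literature, and it could alternatively be quoted from \cite{DIS} or \cite{CDS}. The only place requiring a little attention is the combinatorial bookkeeping --- isolating the finite admissible set of data $(k,\{\beta^{(l)}\},\gamma,\{j_l\})$ in the Fa\`a di Bruno expansion and checking its coefficients are independent of $u$ and $F$, keeping track of the derivative count so that $\|DF\|_{m-1}$ and not $\|DF\|_m$ appears, and choosing the interpolation endpoints in the last step so that the powers of the lowest-order seminorm emerge precisely as written.
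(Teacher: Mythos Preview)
The paper does not prove this lemma; it is quoted in the appendix with a reference to \cite{DIS}. Your Fa\`a di Bruno plus interpolation strategy is the standard route and is the right idea, but one step does not work as written. For the first estimate, after interpolating the $u$-factors you obtain $\|D^kF\|_0\,[u]_0^{\,k-1}[u]_m$ and then assert $[u]_0^{\,k-1}\le[u]_0^{\,m-1}$ ``since $1\le k-1\le m-1$''; this inequality goes the wrong way whenever $\|u\|_0<1$. The repair is not to bound $\|D^kF\|_0$ crudely by $\|DF\|_{m-1}$ but to interpolate it too: $[F]_k\le C[F]_1^{(m-k)/(m-1)}[F]_m^{(k-1)/(m-1)}$. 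The $k$-th term is then controlled by
\[
C\big([F]_1\big)^{(m-k)/(m-1)}\big([F]_m\,[u]_0^{m-1}\big)^{(k-1)/(m-1)}[u]_m,
\]
and Young's inequality with the conjugate pair $\tfrac{m-1}{m-k},\tfrac{m-1}{k-1}$ gives exactly $C\big([F]_1[u]_m+\|DF\|_{m-1}[u]_0^{m-1}[u]_m\big)$.

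For the second estimate the same bookkeeping shows that the natural outcome of your Young step is $\|DF\|_{m-1}[u]_1^{m}$, not $\|DF\|_{m-1}[u]_1^{m-1}$: interpolating both $F$ and $u$ gives $([F]_1[u]_m)^{(m-k)/(m-1)}([F]_m[u]_1^m)^{(k-1)/(m-1)}$, and already the single term $k=m$ contributes $[F]_m[u]_1^m$. In fact the inequality with exponent $m-1$ is false as printed---take $n=N=1$, $F(t)=\cos(Mt)$ on $(-1,1)$, $u(x)=\tfrac12\sin(Lx)$, $m=2$, and send $M=L\to\infty$---so this is almost certainly a transcription slip for $[u]_1^m$. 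With that correction, your argument (augmented by the interpolation of $F$) goes through.
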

We also recall some estimates about the regularization of H\"older functions (see e.g. \cite{CDS,DIS}).  Let $\varphi_l$ be a  mollification kernel at length-scale $l>0$ and $*$ denote convolution. 
\begin{lemma}\label{l:mollification}
For any $r, s\geq0,$ and $0<\alpha\leq1,$ we have
\begin{align*}
&[f*\varphi_l]_{r+s}\leq Cl^{-s}[h]_r,\\
&\|f-f*\varphi_l\|_r\leq Cl^{1-r}[h]_1, \text{ if } 0\leq r\leq1,\\
&\|(f_1f_2)*\varphi_l-(f_1*\varphi_l)(f_2*\varphi_l)\|_r\leq Cl^{2\alpha-r}\|f_1\|_\alpha\|f_2\|_\alpha,
\end{align*}
with constant $C$ depending only on $n, s, r, \alpha$ and $\varphi.$
\end{lemma}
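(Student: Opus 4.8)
Since these are the classical regularization estimates for H\"older functions, the plan is to derive all three bounds from a short toolbox of elementary facts about the mollifier $\varphi_l(x)=l^{-n}\varphi(x/l)$: that convolution commutes with differentiation, $\partial^\beta(f*\varphi_l)=f*(\partial^\beta\varphi_l)=(\partial^\beta f)*\varphi_l$; that $\int|y|^m|\partial^\beta\varphi_l(y)|\,dy\le Cl^{m-|\beta|}$ by rescaling, and that since $\int\varphi=1$ every moment of $\partial^\beta\varphi_l$ of order strictly below $|\beta|$ vanishes, in particular $\int\partial^\beta\varphi_l=0$ whenever $|\beta|\ge1$; Young's inequality $\|g*\psi\|_0\le\|g\|_0\|\psi\|_{L^1}$; and the non-expansiveness of convolution on H\"older seminorms of order $\le1$, $[g*\psi]_\gamma\le\|\psi\|_{L^1}[g]_\gamma$ for $0<\gamma\le1$, which follows from $(g*\psi)(x)-(g*\psi)(y)=\int(g(x-z)-g(y-z))\psi(z)\,dz$. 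Together with the interpolation inequality recalled above, this list suffices.

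For the first estimate I would prove the equivalent form $[f*\varphi_l]_t\le Cl^{r-t}[f]_r$ for all $t\ge r$, and interpolate between two endpoints. Writing $r=m+\delta$ with $m$ a nonnegative integer and $\delta\in(0,1]$: the low endpoint $[f*\varphi_l]_r\le[f]_r$ follows by moving $m$ derivatives onto $f$ and applying non-expansiveness to $\partial^m f$; the high endpoint $\|D^j(f*\varphi_l)\|_0\le Cl^{r-j}[f]_r$, for any large integer $j$, follows by moving $m$ derivatives onto $f$, the remaining $j-m\ge1$ onto $\varphi_l$, and using $\int\partial^{e}\varphi_l=0$ to subtract $\partial^m f(x)$ inside the convolution integral, so that only the $C^\delta$-seminorm of $f$ enters against $\int|z|^\delta|\partial^{j-m}\varphi_l(z)|\,dz\le Cl^{\delta-(j-m)}$. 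Interpolating these two bounds lands exactly at order $t$ with the power $l^{r-t}$. The second estimate is quicker: from $\int\varphi=1$, $f-f*\varphi_l=\int(f(\cdot)-f(\cdot-y))\varphi_l(y)\,dy$, so $\|f-f*\varphi_l\|_0\le[f]_1\int|y||\varphi_l(y)|\,dy\le Cl[f]_1$; for $r=1$ one simply bounds $\|D(f-f*\varphi_l)\|_0=\|Df-(Df)*\varphi_l\|_0\le2\|Df\|_0\le2[f]_1$; and for $0<r<1$ the interpolation inequality gives $[f-f*\varphi_l]_r\le C\|f-f*\varphi_l\|_0^{1-r}\|f-f*\varphi_l\|_1^{r}\le Cl^{1-r}[f]_1$.

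For the third (commutator) bound I would start from the Constantin--E--Titi identity: with $\delta_y f(x):=f(x-y)-f(x)$,
\[
(f_1f_2)*\varphi_l-(f_1*\varphi_l)(f_2*\varphi_l)=\int(\delta_y f_1)(\delta_y f_2)\,\varphi_l(y)\,dy-(f_1-f_1*\varphi_l)(f_2-f_2*\varphi_l),
\]
verified by expanding both sides and using $\int\varphi_l=1$. For $r=0$ one estimates $\|\delta_y f_i\|_0\le[f_i]_\alpha|y|^\alpha$ and $\|f_i*\varphi_l-f_i\|_0\le C[f_i]_\alpha l^\alpha$ (the latter as in the second estimate), and uses $\int|y|^{2\alpha}|\varphi_l(y)|\,dy\le Cl^{2\alpha}$, to obtain $Cl^{2\alpha}\|f_1\|_\alpha\|f_2\|_\alpha$. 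For $r>0$ one differentiates the identity, transfers all derivatives onto the explicit $\varphi_l$ factors (using that $*$ commutes with $\partial$ for the terms $f_i*\varphi_l$), re-runs the moment-cancellation trick on the resulting $\partial^\beta\varphi_l$ so as not to lose the $l^{2\alpha}$ gain, and treats the fractional order by interpolation, arriving at $Cl^{2\alpha-r}\|f_1\|_\alpha\|f_2\|_\alpha$.

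The only genuinely delicate point is this derivative bookkeeping for the first and third estimates, namely verifying that each differentiation costs exactly a factor $l^{-1}$ and never destroys the gain ($l^{\delta}$ in the first estimate, $l^{2\alpha}$ in the third). This is precisely where the vanishing moments $\int\partial^\beta\varphi_l=0$ and the interpolation inequality become indispensable, since $f$ (respectively $f_1,f_2$) is only H\"older and cannot be differentiated directly; everything else is a routine use of Young's inequality and the scaling $\|\partial^\beta\varphi_l\|_{L^1}=l^{-|\beta|}\|\partial^\beta\varphi\|_{L^1}$.
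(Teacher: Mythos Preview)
Your proposal is a correct and well-organized sketch of the standard proof of these mollification estimates. Note, however, that the paper does not actually prove Lemma~\ref{l:mollification}: it is merely recalled in the appendix with the citation ``see e.g.\ \cite{CDS,DIS}'', so there is no paper-proof to compare against. Your argument follows precisely the route taken in those references---moment cancellation of $\partial^\beta\varphi_l$ plus interpolation for the first two estimates, and the Constantin--E--Titi commutator identity for the third---so in effect you have reproduced the cited proof.
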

\end{appendix}

\section*{Acknowledgements}
Wentao Cao's research was supported by Grant Agreement No. 12471224 of the National Natural Science Foundation of China.
%

\bibliographystyle{plain}

\end{document}